\journalname{Journal of Applied Mathemtics and Computing}
\begin{document}

\title{Parallel and sequential hybrid methods 
for a finite family\\ of  \textcolor[rgb]{1.0,0.0,0.0}{asymptotically quasi $\phi$-nonexpansive mappings}}

\titlerunning{Parallel and sequential hybrid methods}        

\author{Pham Ky Anh $^1$         \and
        Dang Van Hieu $^2$ 
}

\authorrunning{P.K. Anh and D.V. Hieu} 

\email{anhpk@vnu.edu.vn}
\institute{$^{1,  2}$ Department of Mathematics, 
Vietnam National University, Hanoi \at
              334 Nguyen Trai, Thanh Xuan, Hanoi,
Vietnam \\
              Tel.: +84-043-8581135\\
              Fax: +84-043-8588817\\
              \email{anhpk@vnu.edu.vn}           
           \and
             \email{dv.hieu83@gmail.com}
}

\date{Received: date / Accepted: date}

\maketitle

\begin{abstract}
In this paper we study some novel parallel and sequential hybrid methods for finding a common fixed point of a 
finite family of asymptotically quasi $\phi$-nonexpansive mappings. The results presented here modify and extend some previous 
results obtained by several authors.
\keywords{\textcolor[rgb]{1.0,0.0,0.0}{Asymptotically quasi-$\phi$-nonexpansive mapping} \and Common fixed point \and Hybrid method \and Parallel and sequential 
computation.}
\end{abstract}

\section{Introduction}\label{intro}
Let $C$ be a nonempty closed convex subset of a Banach space $E$. A mapping $T:C\to C$ is said to be nonexpansive if 
$$\left\|Tx-Ty\right\|\le \left\|x-y\right\|,   \forall x,y\in C.$$
In 2005, Matsushita and Takahashi \cite{MT2005} proposed the following hybrid method, combining Mann iterations with projections
onto closed convex subsets, for finding  a fixed point of a relatively nonexpansive mapping $T$: 
\begin{eqnarray*}
&&x_0\in C,\\ 
&&y_n=J^{-1}(\alpha_n Jx_n+(1-\alpha_n)JTx_n),\\
&&C_n=\left\{v\in C:\phi(v,y_n)\le \phi(v,x_n)\right\},\\
&&Q_n=\left\{v\in C : \left\langle Jx_0-Jx_n,x_n-v\right\rangle \ge 0\right\},\\
&&x_{n+1}=\Pi_{C_n\bigcap Q_n}x_0,n\ge 0.
\end{eqnarray*}
This algorithm has been modified and generalized for finding a common fixed point of a finite or infinite family of relatively nonexpansive 
mappings by several authors, such as Takahashi et al. \cite{TTK2008}, Takahashi and Zembayashi \cite{TZ2008}, Wang and Xuan \cite{WX2013}, Reich and Sabach 
\cite{RS2010a,RS2010b}, Kang, Su, and Zhang \cite{KSZ2010}, Plubtieng and Ungchittrakool \cite{PU2010}, etc... \\
In 2011, Liu \cite{L2011} introduced the following cyclic 
method for a finite family of relatively nonexpansive mappings:
\begin{eqnarray*}
&&x_0\in C,\\ 
& &y_n=J^{-1}(\alpha_n Jx_0+(1-\alpha_n)JT_{n ({\rm mod}) N}x_n),\\
&&C_n=\left\{v\in C:\phi(v,y_n)\le \alpha_n\phi(v,x_0)+(1-\alpha_n)\phi(v,x_n)\right\},\\
&&Q_n=\left\{v\in C : \left\langle Jx_0-Jx_n,x_n-v\right\rangle \ge 0\right\},\\
&&x_{n+1}=\Pi_{C_n\bigcap Q_n}x_0,n\ge 0.
\end{eqnarray*}
Very recently, Anh and Chung \cite{AC2013} considered the following parallel method for a finite family of 
relatively nonexpansive mappings:
\begin{eqnarray*}
&&x_0\in C,\\ 
&&y_n^i=J^{-1}(\alpha_n Jx_n+(1-\alpha_n)JT_{i}x_n), \quad i=1,\ldots,N,\\
&&i_n=\arg\max_{1\le i\le N}\left\{\left\|y_n^i-x_n \right\|\right\}, \quad  \bar{y}_n := y_n^{i_n},\\
&&C_n=\left\{v\in C:\phi(v,\bar{y}_n)\le \phi(v,x_n)\right\},\\
&&Q_n=\left\{v\in C : \left\langle Jx_0-Jx_n,x_n-v\right\rangle \ge 0\right\},\\
&&x_{n+1}=\Pi_{C_n\bigcap Q_n}x_0,n\ge 0.
\end{eqnarray*}

According to this algorithm, the intermediate approximations $y_n^i$ can be found in parallel. Then among all 
$y_n^i,  i=1,\ldots,N,$  the farest element from $x_n$, denoted by $\bar{y}_n$, is chosen. After that, two convex closed subsets $C_n$ 
and $Q_n$ containing the set of common fixed points are constructed. The next approximation $x_{n+1}$ is defined as the generalized projection of $x_0$ onto the intersection $C_n\bigcap Q_n$.  \\
Further, some generalized hybrid projection methods have been introduced for families of hemi-relatively or weak relatively 
nonexpansive mappings (see, \cite{KSZ2010,SWX2009,WK2013}).\\
On the other hand, there has been an increasing interest in the class of \textcolor[rgb]{1.0,0.0,0.0}{asymptotically quasi $\phi$-nonexpansive mappings}
 (c.f., \cite{CKW2010,CY2013,D2013a,D2013b,DB2013,HLK2004,KK2012,K2011,LL2013,TCLL2010,ZCK2013}), 
which is a generalization of the class of quasi $\phi$- nonexpansive mappings. The last one contains the class of relatively 
nonexpansive mappings as a proper subclass.\\ 
Unfortunately, many hybrid algorithms for (relatively) nonexpansive mappings cannot be directly extended to \textcolor[rgb]{1.0,0.0,0.0}{asymptotically quasi $\phi$-nonexpansive mappings}.\\ 
The aim of this paper is to combine a parallel splitting-up technique proposed in \cite{AC2013} with a monotone hybrid iteration method
(see, \cite{SLZ2011}) for finding a common fixed point of a finite family of \textcolor[rgb]{1.0,0.0,0.0}{asymptotically quasi $\phi$ -nonexpansive} mappings. 
The organization of the paper is as follows: In Section 2 we collect some 
definitions and results which are used in this paper. Section 3 deals with the convergence analysis of the proposed parallel and 
sequential hybrid algorithms. \textcolor[rgb]{1.0,0.0,0.0}{Finally, a numerical example shows that even in the sequential mode, our parallel hybrid method is faster than the corresponding sequential one [20].}

\section{Preliminaries}\label{sec:1}
In this section we recall some definitions and results needed for further investigation. We refer the interested reader to \cite{AR2006,D1975} for more details. 

\begin{definition}
A Banach space $X$ is called
\begin{itemize}
\item  [$1)$] strictly convex if the unit sphere $S_1(0) = \{x \in X: ||x||=1\}$ is strictly convex, i.e., the inequality $||x+y|| <2$ holds for 
all $x, y \in S_1(0),  x \ne y  ;$ 
 \item [$2)$] uniformly convex if for any given $\epsilon >0$ there exists $\delta = \delta(\epsilon)  >0$ such that for all $x,y \in X$ 
with $\left\|x\right\| \le 1, \left\|y\right\|\le 1,\left\|x-y\right\| = \epsilon$ the inequality $\left\|x+y\right\|\le 2(1-\delta)$ holds;
\item [$3)$] smooth if the limit
\begin{equation}\label{smoonthspa}
\lim\limits_{t \to 0} \frac{\left\|x+ty\right\|-\left\|x\right\|}{t}
\end{equation}
exists for all $x,y \in S_1(0)$;
\item[$4)$] uniformly smooth if the limit $(\ref{smoonthspa})$ exists uniformly for all $x,y\in S_1(0)$.
\end{itemize}
\end{definition}
Let $E$ be a real Banach space with the dual $E^*$ and $J:E\to 2^{E^*}$ is the normalized duality mapping defined by
$$ J(x)=\left\{f\in E^*: \left\langle f,x \right\rangle=\left\|x\right\|^2=\left\|f\right\|^2\right\}.$$
The following basic properties of the geometry of $E$ and its normalized duality mapping $J$ 
can be found in \cite{C1990}:
\begin{itemize}
\item [i)] If $E$ is a reflexive and strictly convex Banach space, then $J^{-1}$ is norm to weak $^*$  continuous;
\item [ii)] If $E$ is a smooth, strictly convex, and reflexive Banach space, then the normalized duality mapping $J:E\to 2^{E^*}$ is 
single-valued, one-to-one, and onto;
\item [iii)] If $E$ is a uniformly smooth Banach space, then $J$ is uniformly continuous on each bounded subset of $E$;
\item [iv)] A Banach space $E$ is uniformly smooth if and only if $E^*$ is uniformly convex;
\item [v)] Each uniformly convex Banach space $E$ has the Kadec-Klee property, i.e., for any
sequence $\left\{x_n\right\} \subset E$, if $x_n \rightharpoonup x \in E$ and $\left\| x_n \right\|\to \left\|x\right\|$, then $x_n \to x$.
\end{itemize}
Next we assume that $C$ is a nonempty closed convex subset of a smooth, strictly convex, and reflexive Banach space $E$. 
Consider the Lyapunov functional $\phi: E\times E \to R_+$ defined by
$$ \phi(x,y)=\left\|x\right\|^2-2\left\langle x,Jy\right\rangle+\left\|y\right\|^2, \forall x, y \in E. $$
From the definition of $\phi$, we have
\begin{equation}\label{eq:proPhi}
\left(\left\|x\right\|-\left\|y\right\|\right)^2\le \phi(x,y)\le \left(\left\|x\right\|+\left\|y\right\|\right)^2.
\end{equation}
The generalized projection $\Pi_C:E\to C$ is defined by 
\[ \Pi_C(x)=\arg\min_{y\in C}\phi(x,y). \]
\begin{lemma} \label{lem.proProjec} \cite{A1996}
Let E be a smooth, strictly convex, and reflexive Banach space and $C$ be a nonempty closed convex subset of $E$. Then the following 
conclusions hold:
\begin{itemize}
\item [i)] $\phi (x,\Pi_C (y))+\phi (\Pi_C (y),y) \le \phi (x,y), \forall x \in C, y \in E$;
\item [ii)] if $x\in E, z \in C$, then $z= \Pi_C(x)$ iff $\left\langle z-y, Jx-Jz\right\rangle \ge 0, \forall y \in C$;
\item [iii)] $\phi (x,y)=0$ iff $x=y$.
\end{itemize}
\end{lemma}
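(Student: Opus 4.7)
The plan is to prove the three assertions in the order (iii), (ii), (i), since the later parts will lean on the earlier ones (and (iii) is the quickest).

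For (iii), the ``if'' direction is immediate from $\phi(x,x)=2\|x\|^2-2\langle x,Jx\rangle=0$. For the converse I would invoke the left half of~(\ref{eq:proPhi}) to extract $\|x\|=\|y\|$ from $\phi(x,y)=0$, and substitute back into the definition of $\phi$ to obtain $\langle x,Jy\rangle=\|x\|^2=\|x\|\|Jy\|$. By the definition of the normalized duality map, this forces $Jy\in J(x)$; since $E$ is smooth, strictly convex and reflexive, the bulleted property that $J$ is single-valued and injective then yields $Jy=Jx$ and hence $x=y$.

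For (ii), I would first argue that $\Pi_C(x)$ is well-defined: the functional $u\mapsto \phi(u,x)$ is continuous, strictly convex (because $\|\cdot\|^2$ is strictly convex in a strictly convex space) and coercive (by~(\ref{eq:proPhi})), so reflexivity together with the direct method of the calculus of variations supplies a unique minimizer on $C$. For the variational characterization, smoothness of $E$ makes $u\mapsto \|u\|^2$ Gateaux differentiable with derivative $2Ju$, so $\phi(\cdot,x)$ is convex and Gateaux differentiable with derivative $u\mapsto 2(Ju-Jx)$. The standard first-order optimality condition for minimizing a convex Gateaux-differentiable functional over a closed convex set then reads $\langle 2(Jz-Jx),\, y-z\rangle\ge 0$ for all $y\in C$, which rearranges at once to $\langle z-y,\, Jx-Jz\rangle\ge 0$ and supplies both directions of the equivalence.

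For (i), I would expand algebraically. Setting $z=\Pi_C(y)$ and using only the definition of $\phi$, a short grouping in which $\langle z,Jz\rangle=\|z\|^2$ cancels two terms yields
\[
\phi(x,y)-\phi(x,z)-\phi(z,y) \;=\; 2\langle x-z,\, Jz-Jy\rangle.
\]
Applying (ii) to $z=\Pi_C(y)$ with the test point $x\in C$ gives $\langle z-x,\, Jy-Jz\rangle\ge 0$, so the right-hand side is nonnegative and (i) follows. The main obstacle I anticipate lies in (ii): both the existence/uniqueness of the minimizer and the identity $\nabla\|\cdot\|^2=2J$ draw on the full trio of hypotheses, and one must be careful that smoothness is exactly what forces the Gateaux derivative of $\|\cdot\|^2$ to be single-valued rather than a full duality set. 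Once (ii) is in place, (i) collapses to a single algebraic identity plus a one-line appeal to the variational inequality, and (iii) takes two lines.
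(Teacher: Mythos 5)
The paper does not prove this lemma at all: it is quoted verbatim from Alber \cite{A1996} as a known tool, so there is no internal argument to compare against. Your proof is correct and is essentially the standard one from the cited source: the order (iii) $\to$ (ii) $\to$ (i) is sound, the identity $\phi(x,y)-\phi(x,z)-\phi(z,y)=2\langle x-z,\,Jz-Jy\rangle$ checks out, and each appeal to smoothness (single-valuedness of $J$, Gateaux differentiability of $\|\cdot\|^2$), strict convexity (uniqueness of the minimizer, injectivity of $J$), and reflexivity (existence of the minimizer) is placed where it is actually needed.
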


\begin{lemma}\label{lem.prophi}\cite{A1996}
Let $E$ be a uniformly convex and uniformly smooth real Banach space, $\left\{x_n\right\}$ and $\left\{y_n\right\}$ be two sequences 
in $E$. If $\phi(x_n,y_n)\to 0$ and either $\left\{x_n\right\}$ or $\left\{y_n\right\}$ is bounded, then $\left\|x_n-y_n\right\|\to 0$ as 
$n\to \infty$.
\end{lemma}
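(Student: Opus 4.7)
The assertion is essentially the Kamimura--Takahashi style lemma, and my strategy is to convert the Lyapunov convergence $\phi(x_n,y_n)\to 0$ into a norm convergence by exploiting the modulus of convexity of $E$. The argument will proceed in three stages: first show that both sequences are bounded, then produce a suitable auxiliary gauge function $g$ on a bounded ball whose positivity controls $\|x-y\|$ from $\phi(x,y)$, and finally combine these to deduce $\|x_n-y_n\|\to 0$.

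\textbf{Step 1: boundedness.} From the right-hand side of the inequality
\[
(\|x_n\|-\|y_n\|)^2\le \phi(x_n,y_n)
\]
recorded in (\ref{eq:proPhi}), the hypothesis $\phi(x_n,y_n)\to 0$ already forces $|\,\|x_n\|-\|y_n\|\,|\to 0$. Hence if (say) $\{x_n\}$ is bounded, then so is $\{y_n\}$; the symmetric case is identical. Fix $M>0$ with $\|x_n\|\le M$ and $\|y_n\|\le M$ for all $n$.

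\textbf{Step 2: a convex gauge on a bounded ball.} This is where the geometry of $E$ enters, and I expect it to be the most delicate step. Since $E$ is uniformly convex, standard results in the geometry of Banach spaces (associated with the modulus of convexity, or via Xu's inequality for $\|\cdot\|^2$) produce a continuous, strictly increasing, convex function $g:[0,2M]\to[0,\infty)$ with $g(0)=0$ such that
\[
g(\|x-y\|)\le \phi(x,y)\qquad\text{whenever }\|x\|\le M,\ \|y\|\le M.
\]
The intuition is that $\phi(x,y)=\|x\|^2-2\langle x,Jy\rangle+\|y\|^2$ differs from $\|x-y\|^2$ by a term measuring the duality gap $\langle x-y,Jy\rangle-\langle x,Jy\rangle+\|y\|^2$, and uniform convexity of $E$ (equivalently, uniform smoothness of $E^\ast$) gives quantitative control on this discrepancy on bounded sets. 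Building $g$ rigorously is the main obstacle: one needs to show that if $\phi(x,y)$ is small on a bounded ball then so is $\|x-y\|$, with an explicit modulus depending only on $M$.

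\textbf{Step 3: passing to the limit.} With $g$ in hand, the hypothesis $\phi(x_n,y_n)\to 0$ yields $g(\|x_n-y_n\|)\to 0$. Because $g$ is continuous and strictly increasing with $g(0)=0$, the only cluster value of $\|x_n-y_n\|\in[0,2M]$ is $0$, so $\|x_n-y_n\|\to 0$, as required. If one preferred to avoid constructing $g$ directly, an alternative route would be to argue by contradiction: if $\|x_{n_k}-y_{n_k}\|\ge \varepsilon$ along a subsequence, then after passing to weakly convergent subsequences, the Kadec--Klee property (item (v) in the list of properties of $E$) together with $\|x_n\|-\|y_n\|\to 0$ would contradict $\phi(x_n,y_n)\to 0$ by a normalization and lower-semicontinuity argument on $\phi$. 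Either route leans on uniform convexity in an essential way, which is why the smoothness/convexity assumptions cannot be dropped.
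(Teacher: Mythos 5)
The paper itself contains no proof of this lemma: it is imported verbatim from Alber \cite{A1996} (the same statement is Proposition~2.1 in Kamimura and Takahashi's work on proximal-type algorithms), so there is no in-paper argument to measure yours against. Judged on its own terms, your outline follows the standard route and Steps~1 and~3 are fine, but Step~2 is where all the mathematical content of the lemma lives, and you do not prove it: you explicitly label the existence of a gauge $g$ with $g(\|x-y\|)\le\phi(x,y)$ on bounded balls as ``the main obstacle'' and then invoke it as a ``standard result.'' That result is indeed true (it is the Kamimura--Takahashi gauge lemma, proved from uniform convexity), so the architecture is sound; but as a self-contained proof the attempt has a hole at exactly the one nontrivial step, and the lemma you are invoking is essentially equivalent to the lemma you are trying to prove.

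The gap is closable, and your ``alternative route'' at the end of Step~3 is actually closer to a clean self-contained proof than the gauge route --- except that the mechanism you name (Kadec--Klee plus weak lower semicontinuity of $\phi$) is not the right one: $\phi(x,y)$ contains $\langle x,Jy\rangle$, which does not pass to the limit when both arguments converge only weakly. What works is a normalization argument. Write $\phi(x,y)=\left(\|x\|-\|y\|\right)^2+2\left(\|x\|\,\|y\|-\langle x,Jy\rangle\right)$; both summands are nonnegative since $\langle x,Jy\rangle\le\|x\|\,\|Jy\|=\|x\|\,\|y\|$, so $\phi(x_n,y_n)\to 0$ forces $\|x_n\|-\|y_n\|\to 0$ (whence both sequences are bounded, as in your Step~1) and $\|x_n\|\,\|y_n\|-\langle x_n,Jy_n\rangle\to 0$. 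Suppose $\|x_{n_k}-y_{n_k}\|\ge\varepsilon$ along a subsequence. If $\|y_{n_k}\|\to 0$ along a further subsequence, then $\|x_{n_k}\|\to 0$ too and the difference tends to $0$, a contradiction; so one may assume $\|x_{n_k}\|,\|y_{n_k}\|\ge\delta>0$. Put $u_k=x_{n_k}/\|x_{n_k}\|$ and $v_k=y_{n_k}/\|y_{n_k}\|$; then $\langle u_k,Jv_k\rangle\to 1$, hence $\|u_k+v_k\|\ge\langle u_k+v_k,Jv_k\rangle=\langle u_k,Jv_k\rangle+1\to 2$, and uniform convexity forces $\|u_k-v_k\|\to 0$. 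Since $x_{n_k}-y_{n_k}=\|x_{n_k}\|(u_k-v_k)+\left(\|x_{n_k}\|-\|y_{n_k}\|\right)v_k$, this contradicts $\|x_{n_k}-y_{n_k}\|\ge\varepsilon$. This closes the argument without ever constructing $g$, using only the definition of uniform convexity.
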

Let $C$ be a nonempty closed convex subset of  a smooth, strictly convex, and reflexive Banach space $E$,  $T:C\to C$ be a 
mapping, and $F(T)$ be the set of fixed points of $T$. A point $p\in C$ is said to be an asymptotic fixed point of $T$ if there exists a sequence 
$\left\{x_n\right\} \subset C$ such that $x_n \rightharpoonup p$ and $\left\|x_n-Tx_n \right\|\to 0$ as 
$n\to +\infty$. The set of all asymptotic fixed points of $T$ will be denoted by 	 $\tilde{F}(T)$.
\begin{definition} \label{def.RNM}
A mapping $T:C\to C$ is called
\begin{itemize}
\item [i)] relatively nonexpansive mapping if $F(T)\ne \O, F(T)=\tilde{F}(T)$, and $$ \phi(p,Tx)\le \phi (p,x), \forall p\in F(T),  \forall x\in C
;$$
\item [ii)] closed if for any sequence $\left\{x_n\right\} \subset C, x_n \to x$ and $Tx_n \to y$, then $Tx=y$;
\item [iii)] quasi $\phi$ - nonexpansive mapping (or hemi-relatively nonexpansive mapping) if $F(T) \ne \O$ and $$ \phi(p,Tx)\le 
\phi (p,x), \forall p\in F(T), \forall x\in C;$$
\item [iv)] \textcolor[rgb]{1.0,0.0,0.0}{asymptotically quasi $\phi$-nonexpansive} if $F(T)\ne \O$ and there exists a sequence 
$\left\{k_n\right\} \subset [1,+\infty)$ with $k_n \to 1$ as $n\to +\infty$ such that $$\phi(p,T^n x)\le k_n\phi (p,x), \forall n\ge 1, \forall p\in 
F(T), \forall x\in C; $$
\item [v)] uniformly $L$ - Lipschitz continuous, if there exists a constant $L>0$ such that
\[\left\|T^n x-T^n y \right\|\le L\left\|x- y \right\|, \forall n \geq 1 ,  \forall x, y \in C.\]
\end{itemize}
\end{definition}
\begin{lemma} \cite{CKW2010} \label{lem.proCloseConvex}
Let $E$ be a real uniformly smooth and strictly convex Banach space with 
Kadec-Klee property, and $C$ be a nonempty closed convex subset of $E$. Let $T:C\to C$ be a closed and \textcolor[rgb]{1.0,0.0,0.0}{asymptotically quasi $\phi$-nonexpansive} mapping with a sequence $\left\{k_n\right\}\subset [1,+\infty), k_n\to 1$. Then $F(T)$ is a closed convex subset of $C$.
\end{lemma}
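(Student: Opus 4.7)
The plan is to verify two separate claims: $F(T)$ is closed, and $F(T)$ is convex. Closedness should be essentially immediate from the closedness hypothesis on $T$. So I would take any sequence $\{p_n\}\subset F(T)$ with $p_n\to p\in C$; then $p_n=Tp_n\to p$, and by the definition of a closed mapping (with $x_n=p_n\to p$ and $Tx_n=p_n\to p$), I get $Tp=p$.

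For convexity, the strategy is to fix $p,q\in F(T)$ and $t\in(0,1)$, set $w=tp+(1-t)q\in C$ (since $C$ is convex), and show $Tw=w$ by proving $T^n w\to w$ and then invoking closedness of $T$ via the pair $x_n=T^n w$, $Tx_n=T^{n+1}w$. The key algebraic step is the identity
\begin{equation*}
t\,\phi(p,T^n w)+(1-t)\,\phi(q,T^n w)-\phi(w,T^n w)=t\|p\|^2+(1-t)\|q\|^2-\|w\|^2,
\end{equation*}
obtained just by expanding the definition of $\phi$ and using $Jw$ only through the linear pairing $\langle tp+(1-t)q,JT^n w\rangle=\langle w,JT^n w\rangle$. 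The right-hand side, call it $A$, equals $t\,\phi(p,w)+(1-t)\,\phi(q,w)$ by the same bookkeeping, and $A\ge 0$ by convexity of $\|\cdot\|^2$.

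Combining this identity with the asymptotic quasi-$\phi$-nonexpansivity applied to both $p$ and $q$, I obtain
\begin{equation*}
\phi(w,T^n w)+A\;\le\;k_n\bigl(t\,\phi(p,w)+(1-t)\,\phi(q,w)\bigr)=k_n A,
\end{equation*}
so $\phi(w,T^n w)\le (k_n-1)A\to 0$. The left inequality in \eqref{eq:proPhi} then forces $\{T^n w\}$ to be bounded, so Lemma~\ref{lem.prophi} gives $\|T^n w-w\|\to 0$, i.e., $T^n w\to w$. Since also $T^{n+1}w\to w$, closedness of $T$ yields $Tw=w$, proving $w\in F(T)$.

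I expect the main (and only real) obstacle to be the identification of the convexity-friendly identity above; once that expansion is spotted, the telescoping with $k_n\to 1$ is automatic and the rest is routine. No uniform smoothness of $E$ beyond what is needed to apply Lemma~\ref{lem.prophi} enters, and the Kadec–Klee property is not required for this particular lemma, though it is available from the hypotheses.
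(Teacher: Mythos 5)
First, note that the paper itself gives no proof of this lemma: it is quoted verbatim from \cite{CKW2010}, so your proposal can only be measured against the standard argument in that reference. Your overall architecture is exactly that standard argument: closedness of $F(T)$ follows at once from the closedness of $T$ applied to $x_n=p_n$, $Tx_n=p_n$; convexity rests on the identity
$t\,\phi(p,T^n w)+(1-t)\,\phi(q,T^n w)-\phi(w,T^n w)=t\|p\|^2+(1-t)\|q\|^2-\|w\|^2=:A$,
the observation that $A=t\,\phi(p,w)+(1-t)\,\phi(q,w)\ge 0$, and the resulting estimate $\phi(w,T^n w)\le (k_n-1)A\to 0$, after which $T^n w\to w$ and $T^{n+1}w\to w$ let the closedness of $T$ deliver $Tw=w$. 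All of that algebra checks out.

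The one step that fails under the stated hypotheses is the passage from $\phi(w,T^n w)\to 0$ to $\|T^n w-w\|\to 0$ via Lemma \ref{lem.prophi}: that lemma requires $E$ to be \emph{uniformly convex}, whereas the present lemma assumes only that $E$ is uniformly smooth, strictly convex, and has the Kadec--Klee property, which is strictly weaker. Your closing remark that the Kadec--Klee property is not required is therefore exactly backwards: it is needed precisely to replace Lemma \ref{lem.prophi}. The standard repair, all of whose ingredients are listed in the paper's preliminaries, runs as follows. From $\phi(w,T^n w)\to 0$ and $(\ref{eq:proPhi})$ one gets $\|T^n w\|\to\|w\|$ and $\langle w, JT^n w\rangle\to\|w\|^2$; since $E$ is uniformly smooth, $E^*$ is uniformly convex (property iv), hence reflexive with the Kadec--Klee property (property v), so a subsequence $JT^{n_j}w$ converges weakly to some $f\in E^*$ with $\|f\|\le\liminf\|JT^{n_j}w\|=\|w\|$ and $\langle w,f\rangle=\|w\|^2$; these force $\|f\|=\|w\|$ and, by smoothness and strict convexity, $f=Jw$, whence $JT^{n_j}w\to Jw$ in norm by Kadec--Klee in $E^*$; then $T^{n_j}w\rightharpoonup w$ by the norm-to-weak$^*$ continuity of $J^{-1}$ (property i) and $T^{n_j}w\to w$ by the Kadec--Klee property of $E$; a subsequence-of-a-subsequence argument upgrades this to the full limit. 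If you are content to prove the lemma only in the uniformly convex, uniformly smooth setting in which this paper actually invokes it (Theorems \ref{theo.converges}, \ref{theo.extend1}, \ref{T.converges.se2}), then your proof is complete as written; as a proof of the lemma as stated, the appeal to Lemma \ref{lem.prophi} must be replaced by the Kadec--Klee argument above.
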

\begin{lemma} \cite{CKW2010,KT2009,MT2005} \label{lem.MNOper}
Let $E$ be a strictly convex reflexive smooth Banach space, $A$ be a maximal monotone operator of $E$ into $E^*$, and 
$J_r=(J+rA)^{-1}J:E\to D(A)$ be the resolvent of $A$ with $r>0$. Then, 
\begin{itemize}
\item [$i)$] $ F(J_r)=A^{-1}0$;
\item [$ii)$] $\phi(u,J_r x)\le \phi(u,x)$ for all $u\in A^{-1}0$ and $x\in E$.
\end{itemize}
\end{lemma}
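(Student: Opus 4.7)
The plan is to derive both conclusions directly from the defining identity $J_r=(J+rA)^{-1}J$, combining monotonicity of $A$ with the elementary algebra of the Lyapunov functional $\phi$. The standing hypotheses that $E$ is reflexive, smooth and strictly convex and $A$ is maximal monotone ensure that $J+rA$ is single-valued with range $E^{*}$, so $J_r$ is a well defined single-valued map from $E$ into $D(A)$; this will be used tacitly throughout the argument.

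For part (i) I would simply unwind the definition. The relation $J_r x=x$ is equivalent to $Jx\in (J+rA)x=Jx+rAx$, which reduces to $0\in rAx$, i.e., to $x\in A^{-1}0$. The converse is the same chain of equivalences read backwards, using single-valuedness of the resolvent. No analytic content is involved; this is pure book-keeping.

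Part (ii) carries the real content. Fix $u\in A^{-1}0$ and $x\in E$, and set $y=J_r x$, so that by the definition of $J_r$ one has $\tfrac{1}{r}(Jx-Jy)\in Ay$. Since $0\in Au$, monotonicity of $A$ applied to the pair $(y,u)$ yields
\[
\bigl\langle y-u,\,Jx-Jy\bigr\rangle\;\ge\;0.
\]
The task is then to convert this duality inequality into the functional inequality $\phi(u,y)\le\phi(u,x)$. The cleanest route is the three-point identity
\[
\phi(u,x)-\phi(u,y)-\phi(y,x)\;=\;2\bigl\langle y-u,\,Jx-Jy\bigr\rangle,
\]
obtained by expanding $\phi(a,b)=\|a\|^{2}-2\langle a,Jb\rangle+\|b\|^{2}$ and using $\langle y,Jy\rangle=\|y\|^{2}$. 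Combining this identity with the monotonicity estimate above and the trivial bound $\phi(y,x)\ge 0$ from~(\ref{eq:proPhi}) gives
\[
\phi(u,x)\;\ge\;\phi(u,y)+\phi(y,x)\;\ge\;\phi(u,y),
\]
which is precisely (ii).

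The only genuine hazard is clerical: keeping the signs straight in the expansion of $\phi$ and making sure the duality product $\langle y-u,Jx-Jy\rangle$ appears with the correct orientation when the resolvent inclusion $\tfrac{1}{r}(Jx-Jy)\in Ay$ is plugged into the monotonicity of $A$. Once that bookkeeping is performed, there is no further analytic obstacle, and no properties of $E$ beyond the single-valuedness of $J$ and of the resolvent are required.
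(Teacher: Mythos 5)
Your argument is correct: part (i) is the standard unwinding of $J_r x=x$ into $0\in rAx$, and part (ii) correctly combines the resolvent inclusion $\tfrac{1}{r}(Jx-Jy)\in Ay$, monotonicity of $A$ at the pair $(y,u)$ with $0\in Au$, the three-point identity $\phi(u,x)-\phi(u,y)-\phi(y,x)=2\langle y-u,Jx-Jy\rangle$ (which I checked by direct expansion), and the nonnegativity of $\phi$ from (\ref{eq:proPhi}). The paper itself gives no proof of this lemma --- it is quoted from the cited references --- and your argument is precisely the classical one found there, so there is nothing to reconcile.
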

\begin{lemma}\label{Closed-of-Jr}\cite{SLZ2011}
Let $E$ be a uniformly convex and uniformly smooth Banach space, $A$ be a maximal monotone operator from $E$ to $E^*$, and 
$J_r$ be a resolvent of $A$. Then $J_r$ is closed hemi-relatively nonexpansive mapping.
\end{lemma}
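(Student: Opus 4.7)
The statement has two parts: that $J_r$ is hemi-relatively nonexpansive (equivalently, quasi-$\phi$-nonexpansive) and that $J_r$ is closed. The first part is essentially a restatement of Lemma~\ref{lem.MNOper}: part (i) gives $F(J_r)=A^{-1}0$, and whenever $A^{-1}0\neq\emptyset$, part (ii) immediately delivers the inequality $\phi(p,J_r x)\le \phi(p,x)$ for every $p\in F(J_r)$ and $x\in E$. So no further work is needed here beyond citing Lemma~\ref{lem.MNOper}.

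The substantive part is closedness. Suppose $x_n\to x$ in $E$ and $J_r x_n\to y$ in $E$; I want to conclude $J_r x=y$. The plan is to read off the resolvent identity: by definition $J_r x_n=(J+rA)^{-1}Jx_n$, so $Jx_n\in (J+rA)(J_r x_n)$, which means
\[
v_n := \tfrac{1}{r}\bigl(Jx_n-J(J_r x_n)\bigr) \in A(J_r x_n).
\]
Thus the pair $(J_r x_n,v_n)$ lies in the graph of $A$.

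Now I would pass to the limit using that $E$ is uniformly convex and uniformly smooth, so $J:E\to E^*$ is norm-to-norm uniformly continuous on bounded sets (property (iii) of the preliminaries). Since $x_n\to x$ we get $Jx_n\to Jx$, and since $J_r x_n\to y$ we get $J(J_r x_n)\to Jy$; therefore $v_n\to \tfrac{1}{r}(Jx-Jy)$ in $E^*$. Maximal monotone operators have demiclosed graphs: if $(u_n,v_n)\in \mathrm{graph}(A)$ with $u_n\to y$ strongly and $v_n\to v$ strongly, then $(y,v)\in \mathrm{graph}(A)$. Applying this to the pairs $(J_r x_n,v_n)$ yields $\tfrac{1}{r}(Jx-Jy)\in Ay$, i.e.\ $Jx\in Jy+rAy=(J+rA)y$, and inverting gives $y=(J+rA)^{-1}Jx=J_r x$, as required.

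The only subtlety I would double-check is the graph-closure step for $A$: strictly speaking, a maximal monotone operator on a reflexive Banach space has a graph that is closed in the strong-strong (and even strong-weak) topology of $E\times E^*$, which is exactly what is needed here since both convergences $J_r x_n\to y$ and $v_n\to \tfrac{1}{r}(Jx-Jy)$ are strong. This is standard for maximal monotone operators on reflexive smooth spaces, so I would invoke it rather than reprove it. Everything else is bookkeeping on the uniform continuity of $J$ and the resolvent identity, so I expect no serious technical obstacle.
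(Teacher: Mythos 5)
Your proof is correct. Note that the paper itself offers no proof of this lemma --- it is quoted verbatim from \cite{SLZ2011} --- so there is no in-paper argument to compare against; your route (the quasi-$\phi$-nonexpansiveness read off directly from Lemma~\ref{lem.MNOper}, plus closedness via the resolvent identity $\tfrac{1}{r}(Jx_n-J(J_rx_n))\in A(J_rx_n)$, uniform norm-to-norm continuity of $J$ on bounded sets, and strong--strong closedness of the graph of a maximal monotone operator) is the standard one and is essentially what the cited source does. The only point worth making explicit is that ``hemi-relatively nonexpansive'' in Definition~\ref{def.RNM} requires $F(J_r)\ne\emptyset$, i.e.\ $A^{-1}0\ne\emptyset$, which you correctly flag as an implicit hypothesis; it holds in every application the paper makes of this lemma (e.g.\ Corollary~\ref{coro.MMM}, where $S=\bigcap_i A_i^{-1}0$ is assumed nonempty).
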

\section{Main results}\label{sec:2}
\subsection{\bf Parallel hybrid methods}\label{subsec:2.1}
Assume that $T_i, i=1,2,\ldots, N$,  are \textcolor[rgb]{1.0,0.0,0.0}{asymptotically quasi $\phi$-nonexpansive} mappings with a sequence
$\left\{k_n^i\right\} \subset [1,+\infty), k_n^i\to 1$, i.e.,  
$F(T_i) \ne \O$, and 
$$\phi(p,T_i^n x)\le k_n^i\phi (p,x), \forall n\ge 1, \forall p\in 
F(T_i), \forall x\in C. $$ 
Throughout this paper we suppose that the set $F=\bigcap^N_{i=1} F(T_i) $ is nonempty.\\ 
Then, putting $k_n := \max\{ k_n^i: i=1,\ldots,N \} 
$, we have $k_n\subset [1,+\infty), k_n\to 1, $ and 
$$\phi(p,T^n_i x)\le k_n\phi (p,x),  \forall i =1,\ldots, N,   \forall n\ge 1, \forall p\in F, \forall x\in C. $$
\textcolor[rgb]{1.0,0.0,0.0}{In the following theorems we will assume that the set $F=\bigcap^N_{i=1} F(T_i) $ 
is nonempty and bounded in $C$, i.e., there exists a positive number $\omega$ such that 
$F \subset \Omega := \{u \in C: ||u|| \leq \omega \}$.} 
\begin{theorem}\label{theo.converges}
Let $E$ be a real uniformly smooth and uniformly convex Banach space and $C$ be a nonempty closed convex subset of $E$. Let 
$\left\{T_i\right\}_{i=1}^N:C\to C$ be a finite family of \textcolor[rgb]{1.0,0.0,0.0}{asymptotically quasi $\phi$-nonexpansive} mappings with a 
sequence $\left\{k_n\right\} \subset [1,+\infty), k_n\to 1$. Moreover, suppose for each $i\ge 1$, the 
mapping $T_i$ is uniformly $L_i$ - Lipschitz continuous and the set $F=\bigcap^N_{i=1} F(T_i) $ 
is nonempty and bounded in $C$. Let $\left\{x_n\right\}$ be the sequence generated by
\begin{eqnarray*}
&&x_0\in C, C_0 := C,\\
&&y_n^i=J^{-1}\left(\alpha_n Jx_n +(1-\alpha_n)JT^n_i x_n\right),i=1,2,\ldots,N,\\
&&i_n=\arg\max_{1\le i\le N}\left\{\left\|y_n^i-x_n \right\|\right\}, \quad   \bar{y}_n := y_n^{i_n
},\\
&&C_{n+1}:=\left\{v\in C_n:\phi(v,\bar{y}_n)\le \phi(v,x_n)+\epsilon_n\right\},\\
&&x_{n+1}=\Pi_{C_{n+1}}x_0, n\ge 0,
\end{eqnarray*}
where \textcolor[rgb]{1.0,0.0,0.0}{$\epsilon_n:= (k_n-1)(\omega + ||x_n||)^2$,} and $\left\{\alpha_n\right\}$ is a sequence in $[0,1]$ such that 
$\lim\limits_{n\to\infty}\alpha_n =0$.
Then $\left\{x_n\right\}$ converges strongly to $x^{\dagger}:=\Pi_F x_0$.
\end{theorem}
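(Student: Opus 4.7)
The plan follows the classical monotone hybrid strategy, modified to absorb the error term $\epsilon_n$. First, I would verify by induction that each $C_n$ is closed, convex, and contains $F$. Closedness and convexity reduce, after cancelling $\|v\|^2$ in $\phi(v,\bar{y}_n)\le \phi(v,x_n)+\epsilon_n$, to a linear constraint in $v$, so $C_{n+1}$ is the intersection of $C_n$ with a closed half-space. For the inclusion $F\subset C_{n+1}$, pick $p\in F$; applying $Jy_n^i=\alpha_n Jx_n+(1-\alpha_n)JT_i^n x_n$ together with convexity of $t\mapsto t^2$ on the dual norms gives the standard chain
\[\phi(p,y_n^i)\le \alpha_n\phi(p,x_n)+(1-\alpha_n)\phi(p,T_i^n x_n)\le k_n\phi(p,x_n).\]
Using $\|p\|\le \omega$ and the upper bound $\phi(p,x_n)\le (\omega+\|x_n\|)^2$ from $(\ref{eq:proPhi})$, this rearranges to $\phi(p,y_n^i)\le \phi(p,x_n)+\epsilon_n$, a bound preserved by the maximal selection $\bar{y}_n=y_n^{i_n}$, so $p\in C_{n+1}$.

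Second, from $F\subset C_{n+1}$ and $x_{n+1}=\Pi_{C_{n+1}}x_0$, Lemma~\ref{lem.proProjec}(i) yields the uniform bound $\phi(x_{n+1},x_0)\le \phi(p,x_0)$, so $\{x_n\}$ is bounded. The nesting $C_{n+1}\subset C_n$ combined with the minimizing property of $x_n=\Pi_{C_n}x_0$ makes $\{\phi(x_n,x_0)\}$ nondecreasing, hence convergent. For $m>n$, $x_m\in C_n$ and Lemma~\ref{lem.proProjec}(i) give $\phi(x_m,x_n)\le \phi(x_m,x_0)-\phi(x_n,x_0)\to 0$; Lemma~\ref{lem.prophi} promotes this to $\|x_m-x_n\|\to 0$, so $\{x_n\}$ is Cauchy and converges strongly to some $\hat{x}\in C$.

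Third, I would show $\hat{x}\in F$. Since $x_{n+1}\in C_{n+1}$, $\phi(x_{n+1},\bar{y}_n)\le \phi(x_{n+1},x_n)+\epsilon_n\to 0$, whence $\|x_n-\bar{y}_n\|\to 0$, and the max-selection forces $\|y_n^i-x_n\|\to 0$ for every $i$. The identity $Jy_n^i-Jx_n=(1-\alpha_n)(JT_i^n x_n-Jx_n)$, the hypothesis $\alpha_n\to 0$, and uniform continuity of $J$ and $J^{-1}$ on bounded sets (available since $E$ and $E^*$ are both uniformly smooth) then yield $\|T_i^n x_n-x_n\|\to 0$. The main technical step is upgrading this to $\|T_i x_n-x_n\|\to 0$ via the telescoping
\[\|T_i x_n-x_n\|\le L_i\|x_n-T_i^n x_n\|+L_i\|x_n-x_{n+1}\|+\|T_i^{n+1}x_{n+1}-x_{n+1}\|+\|x_{n+1}-x_n\|,\]
where the first two terms are the uniform $L_i$-Lipschitz estimates of $\|T_i x_n-T_i^{n+1}x_n\|$ and $\|T_i^{n+1}x_n-T_i^{n+1}x_{n+1}\|$ respectively, and the third tends to zero by applying the previous argument at index $n+1$. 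Continuity of $T_i$ together with $x_n\to \hat{x}$ then force $T_i\hat{x}=\hat{x}$, so $\hat{x}\in F$.

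Finally, $\Pi_F x_0\in F\subset C_n$ and $x_n=\Pi_{C_n}x_0$ give $\phi(x_n,x_0)\le \phi(\Pi_F x_0,x_0)$; passing to the limit and invoking the minimizing property of the projection together with $\hat{x}\in F$ forces $\hat{x}=\Pi_F x_0$. The principal obstacle is the shift-by-one argument in the third paragraph: it is precisely where the uniform $L_i$-Lipschitz hypothesis is indispensable, and where proofs tailored to merely quasi $\phi$-nonexpansive mappings fail to transfer. A secondary delicate point is the bookkeeping needed to show $\epsilon_n\to 0$, which relies on the a priori boundedness of $\{x_n\}$ obtained in the second paragraph and on the boundedness assumption on $F$ that motivates the definition of $\epsilon_n$.
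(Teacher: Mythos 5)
Your proposal is correct and follows essentially the same five-step route as the paper: closedness/convexity and $F\subset C_n$ by induction, boundedness and the Cauchy argument via monotonicity of $\phi(x_n,x_0)$, the passage $\phi(x_{n+1},\bar{y}_n)\to 0 \Rightarrow \|x_n-y_n^i\|\to 0 \Rightarrow \|T_i^n x_n-x_n\|\to 0$, the shift-by-one telescoping using the uniform $L_i$-Lipschitz hypothesis, and the final identification with $\Pi_F x_0$. The only (immaterial) differences are that you extract $\|JT_i^n x_n-Jx_n\|\to 0$ from $Jy_n^i-Jx_n=(1-\alpha_n)(JT_i^n x_n-Jx_n)$ rather than from $\|Jy_n^i-JT_i^n x_n\|=\alpha_n\|Jx_n-JT_i^n x_n\|$ as the paper does, and that you telescope directly to $\|T_i x_n-x_n\|\to 0$ instead of to $T_i^{n+1}x_n\to p$.
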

\begin{proof}
The proof of Theorem $\ref{theo.converges}$ is divided into five steps.\\
\textit{Step 1.} Claim that $F$ and $C_n$ are closed and convex subsets of $C$.\\
Indeed, from the uniform $L_i$ - Lipschitz continuty of $T_i$, $T_i$ is $L_i$ - Lipschitz continuty. Hence $T_i$ is continuous. This implies that $T_i$ is closed. By Lemma $\ref{lem.proCloseConvex}$, 
$F(T_i)$ is closed and convex subset of $C$ for all $i=1,2,\ldots, N$. Hence, $F=\bigcap^N_{i=1} F(T_i)$ is closed and convex.
Further, $C_0=C$ is closed and convex by the assumption. Suppose that $C_n$ is a closed and convex subset of $C$ for some 
$n\ge 0$.  From the inequality $\phi(v,\bar{y}_n)\le \phi(v,x_n)+\epsilon_n$, we obtain
$$\left\langle v,Jx_n -J\bar{y}_n\right\rangle \le \frac{1}{2}\left(\left\|x_n \right\|^2-\left\|\bar{y}_n \right\|^2+\epsilon_n\right).$$
Therefore,
$$ C_{n+1}=\left\{v\in C_n:\left\langle v,Jx_n -J\bar{y}_n\right\rangle \le \frac{1}{2}\left(\left\|x_n \right\|^2-\left\|\bar{y}_n \right\|^
2+\epsilon_n\right)\right\}, $$
which implies that $C_{n+1}$ is closed and convex. Thus, $C_n$ is closed and convex subset of $C$ for all $n\ge 0$, and $\Pi_C x_0$ 
and $\Pi_{C_n} x_0$ are well-defined.\\
\textit{Step 2.} Claim that $F\subset C_n$ for all $n\ge 0$. \\
Observe first that $F\subset C_0=C$. Now suppose 
$F\subset C_n$ for some $n\ge 0$. For each $u\in F$, by the convexity of $\left\|.\right\|^2$, we have
\begin{eqnarray*}
\phi(u,\bar{y}_n)&=&\left\|u\right\|^2-2\left\langle u,J\bar{y}_n\right\rangle+\left\|\bar{y}_n \right\|^2\\ 
&=&\left\|u\right\|^2-2\alpha_n\left\langle u,Jx_n\right\rangle -2(1-\alpha_n)\left\langle u,JT_{i_n}^n x_n\right\rangle\\
&&+\left\|\alpha_n Jx_n +(1-\alpha_n)JT^n_{i_n} x_n \right\|^2\\
&\le &\left\|u\right\|^2-2\alpha_n\left\langle u,Jx_n\right\rangle -2(1-\alpha_n)\left\langle u,JT_{i_n}^n x_n\right\rangle\\
&&+\alpha_n\left\|x_n\right\|^2+(1-\alpha_n)\left\|T^n_{i_n} x_n\right\|^2\\
&=&\alpha_n\phi(u,x_n)+(1-\alpha_n)\phi(u,T^n_{i_n} x_n)\\
&\le& \alpha_n\phi(u,x_n)+k_n(1-\alpha_n)\phi(u,x_n)\\
&\le& \phi(u,x_n)+(k_n-1)(1-\alpha_n)\phi(u,x_n) \\
&\le&\phi(u,x_n)+(k_n-1)(\omega+||x_n||)^2 \\
&=& \phi(u,x_n)+\epsilon_n.
\end{eqnarray*}
This implies that $u\in C_{n+1}$. Hence $F\subset C_{n+1}$. By induction, we obtain $F\subset C_n$ for all $n\ge 0$. For each 
$u\in F\subset C_n$, by $x_n=\Pi_{C_n}x_0$ and 
Lemma $\ref{lem.proProjec}$, we have
$$ \phi(x_n,x_0)\le \phi(u,x_0)-\phi(u,x_n)\le\phi(u,x_0).$$
Therefore, the sequence $\left\{\phi(x_n,x_0)\right\}$ is bounded. The boundedness of the sequence $\left\{x_n\right\}$ is followed 
from 
relation $(\ref{eq:proPhi})$. \\
\textit{Step 3.} Claim that the sequence $\left\{x_n\right\}$ converges strongly to some point $p\in C$ as $n \to 
\infty$.\\
 By the construction of $C_n$, we have $C_{n+1}\subset C_n$ and $x_{n+1}=\Pi_{C_{n+1}}x_0 \in C_{n+1}$
Now taking into account  $x_n=\Pi_{C_n}x_0, \,x_{n+1}\in C_n$ and using Lemma $\ref{lem.proProjec}$, we get
$$\phi(x_n,x_0)\le \phi(x_{n+1},x_0)-\phi(x_{n+1},x_n)\le\phi(x_{n+1},x_0).$$
This implies that $\left\{\phi(x_n,x_0)\right\}$ is nondecreasing. Therefore, the limit of $\left\{\phi(x_n,x_0)\right\}$ exists. We also have $x_{m}\in C_m\subset C_n$ for all $m\ge n$. From Lemma $\ref{lem.proProjec}$ and $x_n=\Pi_{C_n}x_0$, we obtain
$$\phi(x_m,x_n)\le\phi(x_m,x_0)-\phi(x_n,x_0)\to 0,$$
as $m,n\to\infty$. This together with Lemma $\ref{lem.prophi}$ implies that $||x_m-x_n||\to 0$. Hence, $\left\{x_n\right\}$ is Cauchy sequence. Since $E$ is complete and $C$ is closed, we get
\begin{equation}\label{eq:4}
\lim\limits_{n\to\infty}x_n=p\in C.
\end{equation}
\textit{Step 4.} Claim that $p\in F$. \\
Indeed, observing that  
\begin{equation}\label{eq:5}
\phi(x_{n+1},x_n)\le \phi(x_{n+1},x_0)-\phi(x_{n},x_0) \to 0,
\end{equation}
and
\begin{equation}\label{eq:5**}
\left\|x_{n+1}-x_n\right\|\to 0.
\end{equation}
In view of $x_{n+1} \in C_{n+1}$ and by the construction of $C_{n+1}$, we obtain
\begin{equation}\label{eq:6}
\phi(x_{n+1},\bar{y}_n) \le \phi(x_{n+1},x_n) +\epsilon_n.
\end{equation}
Recalling that the set $F$ and the sequence $\left\{x_n\right\}$ are bounded, and putting\\$M=\sup\left\{\left\|x_n\right\|:n=1,2,\ldots\right\}$
, we get
\textcolor[rgb]{1.0,0.0,0.0}{\begin{equation}\label{eq:7}
\epsilon_n=(k_n-1)\left(\omega+||x_n||\right)^2\le (k_n-1)\left(\omega+M\right)^2 \to 0.
\end{equation}}
From $(\ref{eq:5}),(\ref{eq:6}),(\ref{eq:7})$, we obtain $\phi(x_{n+1},\bar{y}_n) \to 0$ as $n\to \infty$. This together with 
Lemma $\ref{lem.prophi}$ implies that $\left\|x_{n+1}-\bar{y}_n\right\| \to 0$. Therefore, from $(\ref{eq:5**})$, $||x_n - \bar{y}_n|| 
\to 0$. Further, by the definition of $i_n$, we have 
$\left\|x_n-y_n^{i}\right\| \leq ||x_n - \bar{y}_n|| \to 0$ as $n\to \infty$ for all $i=1,2,\ldots,N$, hence, from $(\ref{eq:4})$ we obtain 
\begin{equation}\label{eq:9}
\lim\limits_{n\to \infty}y_n^i=p, \quad i=1,2,\ldots,N.
\end{equation}
From the relation $y_n^i=J^{-1}\left(\alpha_n Jx_n +(1-\alpha_n)JT^n_i x_n\right)$ we obtain
\begin{equation}\label{eq:10}
\left\|Jy_n^i-JT^n_i x_n\right\|=\alpha_n\left\|Jx_n-JT^n_i x_n\right\|.
\end{equation}
Observing that $\left\{x_n\right\}$ is bounded, $T_i$ is uniformly $L_i$ - Lipschitz continuous, and the 
solution set $F$ is not empty, we have $||Jx_n - JT_i^nx_n|| \leq ||Jx_n||+||JT_i^nx_n|| = ||x_n|| + ||T_i^n x_n|| \leq ||x_n|| +||T_i^nx_n 
- T_i^n\xi|| + ||\xi|| \leq ||x_n|| + L_i||x_n - \xi|| +|\xi||$, where $\xi \in F$ is an arbitrary fixed element. The last inequality proves the 
boundedness of the sequence  $\left\{\left\|Jx_n-JT^n_i x_n\right\|\right\}$ . Using $\lim\limits_{n\to \infty}\alpha_n=0$, from $(\ref 
{eq:10})$, we find
$$\lim\limits_{n\to \infty}\left\|Jy_n^i-JT^n_i x_n\right\|=0.$$
Since $J^{-1}:E^*\to E$ is uniformly continuous on each bounded subset of $E^*$, the last relation 
implies $\lim\limits_{n\to \infty}\left\|y_n^i-T^n_i x_n\right\|=0$. Hence, from $(\ref{eq:9})$ we obtain
\begin{equation}\label{eq:12}
\lim\limits_{n\to \infty}T^n_i x_n=p, \quad i =1,\ldots, N.
\end{equation}
By $(\ref{eq:4}),(\ref{eq:12})$ and the uniform $L_i$- Lipschitz continuity of $T_i$, we have
\begin{eqnarray*}
\left\|T^{n+1}_i x_n-T^{n}_i x_n\right\|\le& \left\|T^{n+1}_i x_n-T^{n+1}_i x_{n+1}\right\| +\left\|T^{n+1}_i x_{n+1} -x_{n+1}
\right\|\\
&+\left\|x_{n+1}-x_n\right\|+\left\|x_n-T^{n}_i x_n\right\|\\ 
 \le& \left(L_i +1\right) \left\|x_{n+1}-x_n\right\| +\left\|T^{n+1}_i x_{n+1} -x_{n+1}\right\|\\
&+\left\|x_n-T^{n}_i x_n\right\| \to 0.
\end{eqnarray*}
Hence, $\lim\limits_{n\to \infty}T^{n+1}_i x_n=p$, i.e., $T^{n+1}_i x_n=T_iT^{n}_i x_n\to p$ as $n\to \infty$. In view of the 
continuty of $T_i$ and $(\ref{eq:12})$, it follows that $T_i p=p$ for all $i=1,2,\ldots,N$. Therefore $p\in F.$\\
\textit{Step 5.} Claim that $p=x^{\dagger}:=\Pi_F(x_0)$. \\
Indeed, since $x^{\dagger}=\Pi_F(x_0)\in F \subset 
C_n$ and $x_n=\Pi_{C_n}(x_0)$, from Lemma $\ref{lem.proProjec}$, we have
\begin{equation}\label{eq:13}
\phi(x_n,x_0)\le \phi(x^\dagger,x_0)-\phi(x^\dagger,x_n)\le \phi(x^\dagger,x_0).
\end{equation}
Therefore, 
\begin{eqnarray*}
\phi(x^\dagger,x_0)&\ge& \lim\limits_{n\to \infty}\phi(x_n,x_0)=\lim\limits_{n\to \infty}\left\{\left\|x_n\right\|^2-2\left\langle x_n,Jx_0
\right\rangle+\left\|x_0\right\|^2\right\}\\ 
&=& \left\|p\right\|^2-2\left\langle p,Jx_0\right\rangle+\left\|x_0\right\|^2\\
&=&\phi(p,x_0).
\end{eqnarray*}
From the definition of $x^\dagger$, it follows that $p=x^\dagger$. The proof of Theorem $\ref{theo.converges}$ is complete.
\end{proof}
\begin{remark}
If  in Theorem $\ref{theo.converges}$ instead of the uniform Lipschitz continuty of the operators $T_i, ~ i=1,\ldots, N$, we require  their closedness and asymptotical regularity
 \cite{CQK2009}, i.e., for any bounded subset $K$ of $C$,
$$\lim_{n\to\infty}\sup\left\{\left\|T^{n+1}_ix-T^n_i x\right\|:x\in K \right\}, i=1,\ldots,N, $$
then we  obtain the strong convergence of a simplier method  than the corresponding ones in Cho, Qin, and Kang \cite{CQK2009} and Chang, Kim, and Wang \cite{CKW2010}.
\end{remark} 
For the case $N=1,$  Theorem $\ref{theo.converges}$ gives the following monotone hybrid method, which modifies the corresponding algorithms in Kim 
and Xu \cite{KX2006}, as well as Kim and Takahashi (Theorems 3.1,  3.7, 4.1 \cite{KT2010}).
\begin{corollary}
Let $E$ be a real uniformly smooth and uniformly convex Banach space and $C$ be a nonempty closed convex subset of $E$. Let 
$T:C\to C$ be an \textcolor[rgb]{1.0,0.0,0.0}{asymptotically quasi $\phi$-nonexpansive} mapping with a 
sequence $\left\{k_n\right\} \subset [1,+\infty), k_n\to 1$. Moreover, suppose that the 
mapping $T$ is uniformly $L$ - Lipschitz continuous and the set $F(T) $ 
is nonempty and bounded in $C$. Let $\left\{x_n\right\}$ be the sequence generated by
\begin{eqnarray*}
&&x_0\in C, C_0 := C,\\
&&y_n=J^{-1}\left(\alpha_n Jx_n +(1-\alpha_n)JT^n x_n\right),\\
&&C_{n+1}:=\left\{v\in C_n:\phi(v,{y}_n)\le \phi(v,x_n)+\epsilon_n\right\},\\
&&x_{n+1}=\Pi_{C_{n+1}}x_0, n\ge 0,
\end{eqnarray*}
where \textcolor[rgb]{1.0,0.0,0.0}{$\epsilon_n=(k_n-1)(\omega+||x_n||)^2$} and $\left\{\alpha_n\right\}$ is a sequence in $[0,1]$ such that 
$\lim\limits_{n\to\infty}\alpha_n =0$.
Then $\left\{x_n\right\}$ converges strongly to $x^{\dagger}:=\Pi_{F(T)} x_0$.
\end{corollary}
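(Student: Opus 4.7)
The plan is to recognize this corollary as a direct specialization of Theorem \ref{theo.converges} to the case $N=1$, so essentially no new work is required. First, I would match the data: set $T_1 := T$, $L_1 := L$, and use the same sequences $\{k_n\}$ and $\{\alpha_n\}$. The intersection $F = \bigcap_{i=1}^{1} F(T_i) = F(T)$ is nonempty and bounded by assumption, so there exists $\omega > 0$ with $F \subset \{u \in C : \|u\| \leq \omega\}$, exactly as required by the theorem.

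Next, I would verify that the iteration of the corollary literally coincides with the iteration of the theorem when $N=1$. Since there is only one index, the step
\[
i_n = \arg\max_{1\leq i\leq N}\{\|y_n^i - x_n\|\}
\]
collapses trivially to $i_n \equiv 1$, giving $\bar{y}_n = y_n^1 = y_n$. Consequently the sets $C_{n+1}$ defined in both algorithms agree, the error term $\epsilon_n = (k_n-1)(\omega + \|x_n\|)^2$ is identical, and the generalized projection defining $x_{n+1}$ is the same. Hence the sequence $\{x_n\}$ produced by the corollary coincides term-by-term with the one produced by Theorem \ref{theo.converges}.

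Finally, invoking Theorem \ref{theo.converges} yields $x_n \to \Pi_F x_0 = \Pi_{F(T)} x_0$, which is precisely the desired conclusion. Because this reduction is purely mechanical, there is no serious obstacle; the only points to keep explicit are the trivial character of the $\arg\max$ step and the identification of the single Lipschitz constant $L$ with $L_1$. If one preferred an independent argument, one could repeat the five-step scheme of the theorem's proof almost verbatim, dropping all references to the index $i$ and using Lemma \ref{lem.proCloseConvex} once to conclude that $F(T)$ is closed and convex; but this would only duplicate work already carried out.
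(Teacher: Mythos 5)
Your reduction is exactly how the paper obtains this result: the corollary is stated as the immediate $N=1$ specialization of Theorem \ref{theo.converges}, with the $\arg\max$ step collapsing and $\bar{y}_n = y_n$, and no separate proof is given. Your proposal is correct and matches the paper's approach.
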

Next, we consider a modified version of the algorithm proposed in Theorem $\ref{theo.converges}$.
\begin{theorem}\label{theo.extend1}
Let $E$ be a real uniformly smooth and uniformly convex Banach space and $C$ be a nonempty closed convex subset of $E$. Let 
$\left\{T_i\right\}_{i=1}^N:C\to C$ be a finite family of \textcolor[rgb]{1.0,0.0,0.0}{asymptotically quasi $\phi$-nonexpansive} mappings with a 
sequence $\left\{k_n\right\} \subset [1,+\infty), k_n\to 1$. Moreover, suppose for each $i\ge 1$, the 
mapping $T_i$ is uniformly $L_i$ - Lipschitz continuous and the set $F=\bigcap^N_{i=1} F(T_i) $ 
is nonempty and bounded in $C$. Let $\left\{x_n\right\}$ be the sequence generated by
\begin{eqnarray*}
&&x_0\in C, C_0 := C,\\
&&y_n^i=J^{-1}\left(\alpha_n Jx_0 +(1-\alpha_n)JT^n_i x_n\right),i=1,2,\ldots,N,\\
&&i_n=\arg\max_{1\le i\le N}\left\{\left\|y_n^i-x_n \right\|\right\}, \quad   \bar{y}_n := y_n^{i_n
},\\
&&C_{n+1}:=\left\{v\in C_n:\phi(v,\bar{y}_n)\le \alpha_n\phi(v,x_0)+(1-\alpha_n)\phi(v,x_n)+\epsilon_n\right\},\\
&&x_{n+1}=\Pi_{C_{n+1}}x_0, n\ge 0,
\end{eqnarray*}
where \textcolor[rgb]{1.0,0.0,0.0}{$\epsilon_n=(k_n-1)(\omega+||x_n||)^2$} and $\left\{\alpha_n\right\}$ is a sequence in $[0,1]$ such that 
$\lim\limits_{n\to\infty}\alpha_n =0$.
Then $\left\{x_n\right\}$ converges strongly to $x^{\dagger}:=\Pi_F x_0$.
\end{theorem}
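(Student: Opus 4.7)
The plan is to follow the same five-step skeleton used in the proof of Theorem \ref{theo.converges}, adapting each step to accommodate the two algorithmic changes: the intermediate iterate now uses $Jx_0$ instead of $Jx_n$, and the defining inequality for $C_{n+1}$ carries an extra convex-combination term $\alpha_n\phi(v,x_0)+(1-\alpha_n)\phi(v,x_n)$.

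\textbf{Step 1 (closedness/convexity).} Expanding the $\phi$'s in $\phi(v,\bar y_n)\le \alpha_n\phi(v,x_0)+(1-\alpha_n)\phi(v,x_n)+\epsilon_n$, the $\|v\|^2$ terms cancel and the constraint rewrites as a single affine inequality in $v$ of the form $\langle v, \alpha_n Jx_0+(1-\alpha_n)Jx_n-J\bar y_n\rangle \le \tfrac12 \bigl(\alpha_n\|x_0\|^2+(1-\alpha_n)\|x_n\|^2-\|\bar y_n\|^2+\epsilon_n\bigr)$, so each $C_{n+1}$ is the intersection of $C_n$ with a closed half-space; by induction, $C_n$ is closed and convex. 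Closedness and convexity of $F$ come from Lemma \ref{lem.proCloseConvex} as before.

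\textbf{Step 2 ($F\subset C_n$).} For $u\in F\subset C_n$, the convexity of $\|\cdot\|^2$ applied to $J\bar y_n=\alpha_n Jx_0+(1-\alpha_n)JT_{i_n}^n x_n$ gives
\begin{equation*}
\phi(u,\bar y_n)\le \alpha_n\phi(u,x_0)+(1-\alpha_n)\phi(u,T_{i_n}^n x_n)\le \alpha_n\phi(u,x_0)+(1-\alpha_n)k_n\phi(u,x_n),
\end{equation*}
and writing $(1-\alpha_n)k_n\phi(u,x_n)=(1-\alpha_n)\phi(u,x_n)+(1-\alpha_n)(k_n-1)\phi(u,x_n)$ together with the bound $\phi(u,x_n)\le (\omega+\|x_n\|)^2$ (since $\|u\|\le\omega$) absorbs the excess into $\epsilon_n$. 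So $u\in C_{n+1}$. Boundedness of $\{x_n\}$ then follows exactly as in Theorem \ref{theo.converges} from $\phi(x_n,x_0)\le \phi(u,x_0)$.

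\textbf{Steps 3--5 (Cauchy property, identification of the limit, projection characterization).} The monotonicity argument giving $\{x_n\}$ Cauchy with limit $p\in C$ is word-for-word the same (it uses only the nesting $C_{n+1}\subset C_n$ and the projection characterization). The key new point lives in Step~4: from $x_{n+1}\in C_{n+1}$ we get
\begin{equation*}
\phi(x_{n+1},\bar y_n)\le \alpha_n\phi(x_{n+1},x_0)+(1-\alpha_n)\phi(x_{n+1},x_n)+\epsilon_n,
\end{equation*}
and since $\{x_n\}$ is bounded the quantity $\phi(x_{n+1},x_0)$ is bounded, so $\alpha_n\to 0$, $\phi(x_{n+1},x_n)\to 0$, and $\epsilon_n\to 0$ together force $\phi(x_{n+1},\bar y_n)\to 0$. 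Lemma \ref{lem.prophi} then yields $\|x_{n+1}-\bar y_n\|\to 0$, hence $\|x_n-y_n^i\|\to 0$ for every $i$ by the definition of $i_n$, and so $y_n^i\to p$. From $\|Jy_n^i-JT_i^n x_n\|=\alpha_n\|Jx_0-JT_i^n x_n\|$ and the boundedness of the right-hand side (which uses the uniform $L_i$-Lipschitz property as in Theorem~\ref{theo.converges}, with $x_n$ replaced by $x_0$ in the telescoping estimate), we conclude $T_i^n x_n\to p$. The Lipschitz argument comparing $T_i^{n+1}x_n$ and $T_i^n x_n$ then gives $T_i p=p$. Step~5 (identification $p=\Pi_F x_0$) is unchanged.

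\textbf{Main obstacle.} The only place the modified algorithm alters the analysis is in controlling $\phi(x_{n+1},\bar y_n)$: in Theorem \ref{theo.converges} this quantity was bounded directly by $\phi(x_{n+1},x_n)+\epsilon_n$, while here the extra $\alpha_n\phi(x_{n+1},x_0)$ term appears. I expect this to be the only step requiring care, and it is handled cleanly by the boundedness of $\{x_n\}$ combined with $\alpha_n\to 0$; no stronger rate on $\alpha_n$ is needed.
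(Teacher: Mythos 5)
Your proposal is correct and follows essentially the same route as the paper's own proof: both adapt the five-step argument of Theorem \ref{theo.converges}, with the same three adjustments (the half-space rewriting of $C_{n+1}$ now involving $\alpha_n Jx_0+(1-\alpha_n)Jx_n-J\bar y_n$, the extra $\alpha_n\phi(u,x_0)$ term absorbed in the $F\subset C_n$ induction, and the $\alpha_n\phi(x_{n+1},x_0)$ term in Step 4 killed by boundedness of $\{x_n\}$ together with $\alpha_n\to 0$). No gaps.
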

\begin{proof} Following five steps in the proof  of Theorem $\ref{theo.converges}$, we can show that: \\
\noindent (i)   $C_n$ and $F$ are closed and convex subset of $C$ for all $n\ge 0$. Therefore, $\Pi_{C_n} x_0 , n\ge 0$ and $\Pi_F x_0$ are
well-defined.\\
\noindent (ii) $F\subset C_n$ for all $n\ge 0$. \\
Suppose  $F\subset C_n$ for some $n\ge 0 ~ (F\subset C_0=C)$. For each $u\in F$, using the convexity of $\left\|.\right\|^2$, we get
\begin{eqnarray*}
\phi(u,\bar{y}_n)&=&\left\|u\right\|^2-2\left\langle u,J\bar{y}_n\right\rangle+\left\|\bar{y}_n \right\|^2\\ 
&\le& \alpha_n\phi(u,x_0)+k_n(1-\alpha_n)\phi(u,x_n)\\
&\le &\alpha_n\phi(u,x_0)+(1-\alpha_n)\phi(u,x_n)+(k_n-1)(1-\alpha_n)\phi(u,x_n)\\
&\le &\alpha_n\phi(u,x_0)+(1-\alpha_n)\phi(u,x_n)+(k_n-1)(\omega+||x_n||^2)\\
&=& \alpha_n\phi(u,x_0)+(1-\alpha_n)\phi(u,x_n)+\epsilon_n.
\end{eqnarray*}
This implies that $u\in C_{n+1}$. Hence $F\subset C_{n+1}$. By induction, we obtain $F\subset C_n$ for all $n\ge 0$. \\
\noindent (iii) The sequence $\left\{x_n\right\}$ converges strongly to some point $p\in C$ as $n \to \infty$.\\
For each $u\in F\subset C_n$, using Lemma $\ref{lem.proProjec}$ and taking into account  that $x_n=\Pi_{C_n}x_0$, we have
$$ \phi(x_n,x_0)\le \phi(u,x_0)-\phi(u,x_n)\le\phi(u,x_0).$$
Therefore, the sequence $\left\{\phi(x_n,x_0)\right\}$ is bounded. From $\left(\ref{eq:proPhi}\right)$, $\left\{x_n\right\}$ is also bounded. Since $C_{n+1}\subset C_n$ and
 $x_{n+1}=\Pi_{C_n+1}x_0 \in C_n$ for all $n\ge 0$, by  Lemma $\ref{lem.proProjec}$ we have
$$\phi(x_n,x_0)\le\phi(x_{n+1},x_0)-\phi(x_{n+1},x_n)\le \phi(x_{n+1},x_0).$$
Thus, the sequence $\left\{\phi(x_n,x_0)\right\}$ is nondecreasing, hence it has a finite limit as $n \to \infty$. Moreover, for all $m\ge n$, we also have $x_m=\Pi_{C_m}x_0\in C_m\subset C_n.$  From 
$x_{n}=\Pi_{C_n}x_0$ and Lemma $\ref{lem.proProjec}$, we obtain
\begin{equation}\label{eq:5*}
\phi(x_{m},x_n)\le\phi(x_{m},x_0)-\phi(x_n,x_0)\to 0
\end{equation}
as $m,n\to \infty$. Lemma $\ref{lem.prophi}$ yields $\left\|x_m-x_n\right\|\to 0$ as $m,n\to \infty$. Therefore, $\left\{x_n\right\}$ is a 
Cauchy sequence in $C$. Since $E$ is Banach space and $C$ is closed, $x_n\to p\in C$ as $n\to\infty$.\\
\noindent (iv) $p\in F$.\\
 In view of $x_{n+1} \in C_{n+1}$ and by the construction of $C_{n+1}$, we get
\begin{equation}\label{eq:6*}
\phi(x_{n+1},\bar{y}_n) \le \alpha_n\phi(x_{n+1},x_0)+(1-\alpha_n)\phi(x_{n+1},x_n) +\epsilon_n.
\end{equation}
Using $\lim_{n\to\infty}\alpha_n =0$, relations $(\ref{eq:5*}),(\ref{eq:6*})$, and noting that $\epsilon_n\to 0$, we find 
$\phi(x_{n+1},\bar{y}_n) \to 0$ as $n\to \infty$. This together with 
Lemma $\ref{lem.prophi}$ imply that $\left\|x_{n+1}-\bar{y}_n\right\| \to 0$. Therefore, $\bar{y}_n\to p$ and $||x_n - \bar{y}_n|| 
\to 0$. Further, by the definition of $i_n$, we have 
$\left\|x_n-y_n^{i}\right\| \leq ||x_n - \bar{y}_n|| \to 0$ as $n\to \infty$ for all $i=1,2,\ldots,N$, hence, from $x_n\to p$, we obtain 
\begin{equation}\label{eq:9*}
\lim\limits_{n\to \infty}y_n^i=p, \quad i=1,2,\ldots,N.
\end{equation}
Taking into account the relation $y_n^i=J^{-1}\left(\alpha_n Jx_0 +(1-\alpha_n)JT^n_i x_n\right)$, we obtain
\begin{equation}\label{eq:10*}
\left\|Jy_n^i-JT^n_i x_n\right\|=\alpha_n\left\|Jx_0-JT^n_i x_n\right\|.
\end{equation}
Observing that $\left\{x_n\right\}$ is bounded, $T_i$ is uniformly $L_i$ - Lipschitz continuous, and the 
solution set $F$ is not empty, we have $||Jx_0 - JT_i^nx_n|| \leq ||Jx_0||+||JT_i^nx_n|| = ||x_0|| + ||T_i^n x_n|| \leq ||x_0|| +||T_i^nx_n 
- T_i^n\xi|| + ||\xi|| \leq ||x_0|| + L_i||x_n - \xi|| +|\xi||$, where $\xi \in F$ is an arbitrary fixed element. The last inequality proves the 
boundedness of the sequence  $\left\{\left\|Jx_0-JT^n_i x_n\right\|\right\}$ . Using $\lim\limits_{n\to \infty}\alpha_n=0$ from $(\ref 
{eq:10*})$, we find
$$\lim\limits_{n\to \infty}\left\|Jy_n^i-JT^n_i x_n\right\|=0.$$
Since $J^{-1}:E^*\to E$ is uniformly continuous on each bounded subset of $E^*$, the last relation 
implies $\lim\limits_{n\to \infty}\left\|y_n^i-T^n_i x_n\right\|=0$. Hence, from $(\ref{eq:9*})$ we obtain
$$\lim\limits_{n\to \infty}T^n_i x_n=p, \quad i =1,\ldots, N.$$
Finally,  a similar argument as in Step 5 of Theorem $\ref{theo.converges}$ leads to the conclusion that $p\in F$ and $p=x^\dagger=\Pi_F x_0$. The proof of
Theorem $\ref{theo.extend1}$ is complete.
\end{proof}
\begin{remark}
Theorem $\ref{theo.extend1}$ is an extended version of Theorem 3.1 in \cite{CQK2009} and Corollary 2.5 in \cite{CY2013}
for a family of \textcolor[rgb]{1.0,0.0,0.0}{asympotically quasi-$\phi$-nonexpansive} mappings. It also 
simplifies some previous results of Chang and Yan (Theorem 2.1 \cite{CY2013}) and Cho, Qin, and Kang (Theorem 3.5 \cite{CQK2009}).
In the case $N=1,$  our method modifies the algorithm of Kim and Takahashi \cite{KT2010}.
\end{remark}
In the next theorem, we show that for quasi $\phi$ - nonexpansive mappings $\left\{T_i\right\}_{i=1}^N$, the assumptions on their uniform Lipschitz 
continuity, as well as the boundedness of  the set of common fixed points $F=\bigcap^N_{i=1} F(T_i)$ are redundant.
\begin{theorem}\label{theo.extend2}
Let $E$ be a real uniformly smooth and uniformly convex Banach space, $C$ be a nonempty closed convex subset of $E$, and 
$\left\{T_i\right\}_{i=1}^N:C\to C$ be a finite family of closed and quasi $\phi$ - nonexpansive mappings. Suppose that 
$F=\bigcap^N_{i=1} F(T_i) \ne \O$. Let $\left\{x_n\right\}$ be the sequence generated by
\begin{eqnarray*}
&&x_0\in C, C_0:=C,\\
&&y_n^i=J^{-1}\left(\alpha_n Jx_n +(1-\alpha_n)JT_i x_n\right),i=1,2,\ldots,N,\\
&&i_n=\arg\max_{1\le i\le N}\left\{\left\|y_n^i-x_n \right\|\right\}, \bar{y}_n:=y_n^{i_n},\\
&&C_{n+1}:=\left\{v\in C_n:\phi(v,\bar{y}_n)\le \phi(v,x_n)\right\},\\
&&x_{n+1}=\Pi_{C_{n+1}}x_0, n\ge 0,
\end{eqnarray*}
where $\left\{\alpha_n\right\}$ is a sequence in $[0,1]$ such that $\lim\limits_{n\to\infty}\alpha_n =0$.
Then $\left\{x_n\right\}$ converges strongly to $x^{\dagger}:=\Pi_F x_0$.
\end{theorem}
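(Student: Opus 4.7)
The plan is to follow the five-step scheme of Theorem \ref{theo.converges}, but exploit the absence of the asymptotic factor $k_n$ (equivalently $k_n \equiv 1$) so that the error term $\epsilon_n$ disappears, and use quasi-$\phi$-nonexpansiveness together with the closedness hypothesis to compensate for the missing Lipschitz and boundedness assumptions. In particular, the definition of $C_{n+1}$ now uses $T_i$ in place of $T_i^n$, and the inequality there has no additive $\epsilon_n$, which significantly simplifies the bookkeeping.

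For Step 1, $F$ is closed and convex by Lemma \ref{lem.proCloseConvex} (applied with $k_n\equiv 1$, so each closed quasi-$\phi$-nonexpansive $T_i$ has closed convex $F(T_i)$); the set $C_{n+1}$ is an intersection of $C_n$ with the half-space $\{v:2\langle v,Jx_n-J\bar y_n\rangle\le \|x_n\|^2-\|\bar y_n\|^2\}$, hence closed and convex. For Step 2, given $u\in F\subset C_n$, I would estimate
\begin{equation*}
\phi(u,\bar y_n)\le \alpha_n\phi(u,x_n)+(1-\alpha_n)\phi(u,T_{i_n}x_n)\le \phi(u,x_n),
\end{equation*}
using convexity of $\|\cdot\|^2$ and quasi-$\phi$-nonexpansiveness; so $u\in C_{n+1}$. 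Step 3 is verbatim the argument of Theorem \ref{theo.converges}: $\{\phi(x_n,x_0)\}$ is bounded and nondecreasing, hence convergent; Lemma \ref{lem.proProjec}(i) gives $\phi(x_m,x_n)\to 0$, and Lemma \ref{lem.prophi} yields that $\{x_n\}$ is Cauchy, with limit $p\in C$.

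The main step is Step 4, where I need to prove $p\in F$ without invoking Lipschitz continuity or boundedness of $F$. From $\phi(x_{n+1},\bar y_n)\le \phi(x_{n+1},x_n)\to 0$ and Lemma \ref{lem.prophi}, I obtain $\|x_{n+1}-\bar y_n\|\to 0$, hence $\|x_n-\bar y_n\|\to 0$, and by the choice of $i_n$, $\|x_n-y_n^i\|\to 0$ for every $i$, so $y_n^i\to p$. The key substitute for the Lipschitz bound is as follows: picking any fixed $u\in F$, quasi-$\phi$-nonexpansiveness gives $\phi(u,T_ix_n)\le \phi(u,x_n)$, which together with (\ref{eq:proPhi}) forces $\{\|T_ix_n\|\}$ to be bounded; consequently $\{\|Jx_n-JT_ix_n\|\}$ is bounded. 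Since $\|Jy_n^i-JT_ix_n\|=(1-\alpha_n-1)\alpha_n$ — more precisely $\|Jy_n^i-JT_ix_n\|=\alpha_n\|Jx_n-JT_ix_n\|\to 0$, and $J^{-1}$ is uniformly continuous on bounded sets, I get $\|y_n^i-T_ix_n\|\to 0$, hence $T_ix_n\to p$. Finally, and this is where closedness is used \emph{directly} in place of the awkward $T_i^{n+1}x_n-T_i^nx_n\to 0$ argument of Theorem \ref{theo.converges}: since $x_n\to p$ and $T_ix_n\to p$, closedness of $T_i$ gives $T_ip=p$ for every $i$, so $p\in F$.

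Step 5 is identical to the corresponding step of Theorem \ref{theo.converges}: comparing $\phi(x_n,x_0)$ with $\phi(x^\dagger,x_0)$ via Lemma \ref{lem.proProjec}, passing to the limit, and invoking the uniqueness of the generalized projection yields $p=x^\dagger=\Pi_Fx_0$. The only delicate point in the whole plan is the derivation of the boundedness of $\{\|Jx_n-JT_ix_n\|\}$ without any Lipschitz hypothesis; this is handled cleanly through quasi-$\phi$-nonexpansiveness combined with (\ref{eq:proPhi}), and together with the closedness of $T_i$ replaces the long asymptotic-regularity type argument needed in the asymptotic setting.
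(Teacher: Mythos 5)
Your proposal is correct and follows essentially the same route as the paper's proof: treat the $T_i$ as asymptotically quasi $\phi$-nonexpansive with $k_n\equiv 1$ (so $\epsilon_n=0$), rerun Steps 1--3 and the derivation of $y_n^i\to p$ from Theorem \ref{theo.converges}, bound $\{\|Jx_n-JT_ix_n\|\}$ via $\phi(u,T_ix_n)\le\phi(u,x_n)$ together with $(\ref{eq:proPhi})$ in place of the Lipschitz estimate, and invoke closedness of $T_i$ to get $T_ip=p$. The only blemish is the garbled intermediate expression ``$(1-\alpha_n-1)\alpha_n$'', which you immediately correct to the right identity $\left\|Jy_n^i-JT_ix_n\right\|=\alpha_n\left\|Jx_n-JT_ix_n\right\|$, so nothing substantive is affected.
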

\begin{proof}
Since $\left\{T_i\right\}_{i=1}^N:C\to C$ are quasi $\phi$ - nonexpansive mappings, for each $i=1,\ldots, N,$ we have
$$\phi(p,T_ix)\le \phi (p,x), \forall p\in F(T_i), x\in C.$$
This implies that $\left\{T_i\right\}_{i=1}^N$ are \textcolor[rgb]{1.0,0.0,0.0}{asymptotically quasi $\phi$-nonexpansive} mappings 
with $k_n = 1,  n\geq 1$. \textcolor[rgb]{1.0,0.0,0.0}{Putting $\epsilon_n=0$} and arguing similarly as in the 
proof of Theorem $\ref{theo.converges}$, we get $F\subset C_n$. Using Lemma $\ref{lem.proProjec}$ 
and the fact that $x_n=\Pi_{C_n}x_0$ 
, we have $\phi(x_n,x_0)\le \phi(p, x_0)$ for each $p\in F$. Hence, the set $\left\{\phi(x_n,x_0)\right\}$ is bounded. This together with 
inequality $(\ref{eq:proPhi})$ implies that $\left\{x_n\right\}$ is bounded. Repeating the proof of  the relations 
$(\ref{eq:4}),(\ref{eq:9})$, we obtain 
\begin{eqnarray}
&&\lim\limits_{n\to \infty} x_n=p,\label{eq:14*}\\ 
&&\lim\limits_{n\to \infty} y^i_n=p, \, i=1,2,\ldots,N. \label{eq:14**}
\end{eqnarray}
From the equality $y_n^i=J^{-1}\left(\alpha_n Jx_n +(1-\alpha_n)JT_i x_n\right)$ we have
$$\left\|Jy_n^i-JT_i x_n\right\|=\alpha_n\left\|Jx_n-JT_i x_n\right\|.$$
Observing that $\left\{x_n\right\}\subset C$ is bounded, from the definition of quasi $\phi$ - nonexpansive mapping $T_i$, we get 
$\phi(p,T_i x_n)\le\phi(p,x_n)$ for each $p\in F$. Estimate $(\ref{eq:proPhi})$ ensures that $\left\{T_i x_n\right\}$ is 
bounded for each $i=1,\ldots,N$. Therefore, $\left\|Jx_n-JT_i x_n\right\|\le \left\|x_n\right\|+\left\|T_i x_n\right\|$. The last inequality 
implies that the sequence $\left\{\left\|Jx_n-JT_i x_n\right\|\right\}$ is bounded. Using $\lim\limits_{n\to \infty}\alpha_n=0$ we obtain
\begin{equation}\label{eq:15}
\lim\limits_{n\to \infty}\left\|Jy_n^i-JT_i x_n\right\|=0.
\end{equation}
From $(\ref{eq:14**}),(\ref{eq:15})$, by the same way as in the proof of $(\ref{eq:12})$, we get 
\begin{equation}\label{eq:16}
\lim\limits_{n\to \infty}T_i x_n=p,\,i=1,2,\ldots,N.
\end{equation}
By $(\ref{eq:14*}),(\ref{eq:16})$ and the closedness of $T_i$, we obtain $p\in F=\bigcap^N_{i=1} F(T_i)$. Finally, arguing as in Step 5 of the proof of 
Theorem $\ref{theo.converges}$, we can show that $p= x^\dagger.$ Thus, the proof of Theorem $\ref{theo.extend2}$ is complete.
\end{proof}
By the same method we can prove the following result.
\begin{theorem}\label{theo.extend3}
Let $E$ be a real uniformly smooth and uniformly convex Banach space, $C$ be a nonempty closed convex subset of $E$, and 
$\left\{T_i\right\}_{i=1}^N:C\to C$ be a finite family of closed and quasi $\phi$ - nonexpansive mappings. Suppose that 
$F=\bigcap^N_{i=1} F(T_i) \ne \O$. Let $\left\{x_n\right\}$ be the sequence generated by
\begin{eqnarray*}
&&x_0\in C, C_0:=C,\\
&&y_n^i=J^{-1}\left(\alpha_n Jx_0 +(1-\alpha_n)JT_i x_n\right),i=1,2,\ldots,N,\\
&&i_n=\arg\max_{1\le i\le N}\left\{\left\|y_n^i-x_n \right\|\right\}, \bar{y}_n:=y_n^{i_n},\\
&&C_{n+1}:=\left\{v\in C_n:\phi(v,\bar{y}_n)\le \alpha_n\phi(v,x_0)+(1-\alpha_n)\phi(v,x_n)\right\},\\
&&x_{n+1}=\Pi_{C_{n+1}}x_0, n\ge 0,
\end{eqnarray*}
where $\left\{\alpha_n\right\}$ is a sequence in $[0,1]$ such that $\lim\limits_{n\to\infty}\alpha_n =0$.
Then $\left\{x_n\right\}$ converges strongly to $x^{\dagger}:=\Pi_F x_0$.
\end{theorem}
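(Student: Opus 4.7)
My plan is to follow the template of Theorem \ref{theo.extend2}, incorporating the ``$x_0$--type'' modifications already used in passing from Theorem \ref{theo.converges} to Theorem \ref{theo.extend1}. The first observation is that every closed quasi $\phi$-nonexpansive mapping is asymptotically quasi $\phi$-nonexpansive with the trivial sequence $k_n \equiv 1$; consequently, taking $\epsilon_n \equiv 0$ we may quote (or re-derive) every ``skeleton'' step from Theorem \ref{theo.extend1} without ever invoking the boundedness of $F$ or the uniform Lipschitz continuity of the $T_i$, because those hypotheses entered the earlier proof only through the asymptotic error term $\epsilon_n$ and through the control of $T_i^n x_n$ needed when the iteration index $n$ also appeared as a power of $T_i$.

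Concretely, I would first verify that each $C_n$ is closed and convex by rewriting the defining inequality $\phi(v,\bar{y}_n)\le \alpha_n\phi(v,x_0)+(1-\alpha_n)\phi(v,x_n)$ as a linear halfspace constraint in $v$ (the quadratic $\|v\|^2$ terms cancel), and that $F$ is closed and convex by applying Lemma \ref{lem.proCloseConvex} componentwise. Next, for $u\in F$ I would estimate
\begin{eqnarray*}
\phi(u,\bar{y}_n)
&\le& \alpha_n\phi(u,x_0)+(1-\alpha_n)\phi(u,T_{i_n}x_n) \\
&\le& \alpha_n\phi(u,x_0)+(1-\alpha_n)\phi(u,x_n),
\end{eqnarray*}
using convexity of $\|\cdot\|^2$ together with the quasi $\phi$-nonexpansivity of $T_{i_n}$, which gives $F\subset C_{n+1}$ by induction. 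The projection identity $\phi(x_n,x_0)\le \phi(u,x_0)-\phi(u,x_n)$ from Lemma \ref{lem.proProjec} then makes $\{\phi(x_n,x_0)\}$ bounded, and the nesting $C_{n+1}\subset C_n$ together with $x_m\in C_n$ for $m\ge n$ yields $\phi(x_m,x_n)\to 0$, so by Lemma \ref{lem.prophi} the sequence $\{x_n\}$ is Cauchy and converges to some $p\in C$.

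To identify $p$ as a common fixed point, I would use $x_{n+1}\in C_{n+1}$ to obtain
\[
\phi(x_{n+1},\bar{y}_n)\le \alpha_n\phi(x_{n+1},x_0)+(1-\alpha_n)\phi(x_{n+1},x_n),
\]
and then let $n\to\infty$: the right-hand side tends to $0$ because $\alpha_n\to 0$, $\phi(x_{n+1},x_n)\to 0$, and $\{\phi(x_{n+1},x_0)\}$ is bounded. Lemma \ref{lem.prophi} then gives $\|x_{n+1}-\bar{y}_n\|\to 0$; combining with $\|x_{n+1}-x_n\|\to 0$ and the maximality defining $i_n$ yields $\|x_n-y_n^i\|\to 0$ for every $i$, hence $y_n^i\to p$. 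From $\|Jy_n^i-JT_ix_n\|=\alpha_n\|Jx_0-JT_ix_n\|$, together with the boundedness of $\{T_ix_n\}$ (which follows from quasi $\phi$-nonexpansivity and estimate \eqref{eq:proPhi}), we get $\|Jy_n^i-JT_ix_n\|\to 0$, and then uniform continuity of $J^{-1}$ on bounded sets of $E^*$ yields $T_ix_n\to p$. Closedness of $T_i$ now gives $T_ip=p$ for each $i$, so $p\in F$.

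The final identification $p=x^{\dagger}:=\Pi_F x_0$ is obtained exactly as in Step 5 of Theorem \ref{theo.converges}: from $x^{\dagger}\in F\subset C_n$ and $x_n=\Pi_{C_n}x_0$, Lemma \ref{lem.proProjec} gives $\phi(x_n,x_0)\le \phi(x^{\dagger},x_0)$, and passing to the limit yields $\phi(p,x_0)\le \phi(x^{\dagger},x_0)$, which by the definition of $x^{\dagger}$ forces $p=x^{\dagger}$. The main technical point to watch is that the absence of the $T_i^n$ powers makes the $T_ix_n$ boundedness argument immediate from $\phi(p,T_ix_n)\le \phi(p,x_n)$ and \eqref{eq:proPhi}, so no Lipschitz hypothesis or boundedness of $F$ is needed; I do not anticipate a serious obstacle, since this is essentially the ``$x_0$-variant'' of Theorem \ref{theo.extend2} and all required lemmas are already in place.
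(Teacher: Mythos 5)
Your proposal is correct and follows essentially the same route as the paper, which proves Theorem \ref{theo.extend3} simply by remarking that it follows ``by the same method'' as Theorem \ref{theo.extend2} (i.e., viewing the quasi $\phi$-nonexpansive $T_i$ as asymptotically quasi $\phi$-nonexpansive with $k_n\equiv 1$ and $\epsilon_n\equiv 0$, and carrying over the $x_0$-variant estimates of Theorem \ref{theo.extend1}). Your explicit observation that the Lipschitz and boundedness-of-$F$ hypotheses are not needed because no powers $T_i^n$ occur matches the paper's stated rationale exactly.
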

\begin{remark}
Theorem $\ref{theo.extend2}$ modifies Theorem 3.1 \cite{SWX2009}, Theorem 3.1 \cite{ZG2009} and the algorithm in Theorem 3.2 \cite{KT2009}. On the other hand,
the method in Theorem $\ref{theo.extend3}$ simplifies the corresponding one in Theorem 3.3 \cite{SWX2009}. It generalizes  and improves 
Theorem 3.2 \cite{SLZ2011}, Theorem 3.3 \cite{CKW2010}, and Theorem 3.1 in \cite{QCKZ2009}.
\end{remark}
The following result can be obtained from Theorem $\ref{theo.extend2}$ immediately.
\begin{corollary} \label{coro.RNM}
Let $E$ be a real uniformly smooth and uniformly convex Banach space, and $C$ a nonempty closed convex subset of $E$. Let 
$\left\{T_i\right\}_{i=1}^N:C\to C$ be a finite family of closed relatively nonexpansive mappings. Suppose that 
$F=\bigcap^N_{i=1} F(T_i) \ne \O$. Let $\left\{x_n\right\}$ be the sequence generated by
\begin{eqnarray*}
&&x_0\in C, C_0=C,\\
&&y_n^i=J^{-1}\left(\alpha_n Jx_n +(1-\alpha_n)JT_i x_n\right),i=1,2,\ldots,N,\\
&&i_n=\arg\max_{1\le i\le N}\left\{\left\|y_n^i-x_n \right\|\right\}, \bar{y}_n:=y_n^{i_n},\\
&&C_{n+1}=\left\{v\in C_n:\phi(v,\bar{y}_n)\le \phi(v,x_n)\right\},\\
&&x_{n+1}=\Pi_{C_{n+1}}x_0, n\ge 0,
\end{eqnarray*}
where $\left\{\alpha_n\right\}$ is a sequence in $[0,1]$ such that $\lim\limits_{n\to\infty}\alpha_n =0$.
Then $\left\{x_n\right\}$ converges strongly to $x^{\dagger}:=\Pi_F x_0$.
\end{corollary}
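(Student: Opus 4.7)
The plan is to deduce Corollary \ref{coro.RNM} directly from Theorem \ref{theo.extend2}, since the algorithm in the corollary is literally the same scheme (same $y_n^i$, same choice of $\bar{y}_n$, same $C_{n+1}$, same projection step) as the one analyzed there. The only thing that has to be checked is that the class of mappings appearing here is a subclass of the class treated in Theorem \ref{theo.extend2}.

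Concretely, I would first invoke Definition \ref{def.RNM}(i) to observe that every closed relatively nonexpansive mapping $T_i$ satisfies $F(T_i)\neq\emptyset$ together with $\phi(p,T_ix)\le\phi(p,x)$ for all $p\in F(T_i)$ and $x\in C$; comparing this with Definition \ref{def.RNM}(iii) shows that each $T_i$ is in particular a closed quasi $\phi$-nonexpansive mapping. Since the assumption $F=\bigcap_{i=1}^N F(T_i)\neq\emptyset$ is retained, all hypotheses of Theorem \ref{theo.extend2} are in force, and its conclusion gives strong convergence $x_n\to x^{\dagger}:=\Pi_F x_0$.

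I do not anticipate any real obstacle. In particular, the extra property $F(T_i)=\widetilde F(T_i)$ that distinguishes relatively nonexpansive mappings from quasi $\phi$-nonexpansive ones is not exploited in the proof of Theorem \ref{theo.extend2}, so nothing extra must be verified; similarly, the boundedness of $F$ and the Lipschitz/asymptotic assumptions used in Theorem \ref{theo.converges} do not appear in Theorem \ref{theo.extend2} and are therefore not needed here either. Thus the corollary follows by a one-line appeal to Theorem \ref{theo.extend2}, with the class inclusion "closed relatively nonexpansive $\Rightarrow$ closed quasi $\phi$-nonexpansive" as the only justification.
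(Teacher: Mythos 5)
Your proposal is correct and matches the paper exactly: the paper introduces the corollary with the remark that it ``can be obtained from Theorem \ref{theo.extend2} immediately,'' relying on precisely the inclusion you verify, namely that a closed relatively nonexpansive mapping (Definition \ref{def.RNM}(i)) is in particular a closed quasi $\phi$-nonexpansive mapping (Definition \ref{def.RNM}(iii)), after which Theorem \ref{theo.extend2} applies verbatim. Nothing further is needed.
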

\begin{corollary}\label{coro.MMM}
Let $E$ be a real uniformly smooth and uniformly convex Banach space. Let $\left\{A_i\right\}_{i=1}^N:E\to E^*$ be a finite family of 
maximal monotone mappings with $D(A_i)=E$ for all $i=1,\ldots,N$. Suppose that the solution set $S$ of the system of operator 
equations $A_i (x)=0, i=1,\ldots,N$ is nonempty. Let $\left\{x_n\right\}$ be the sequence generated by
\begin{eqnarray*}
&&x_0\in E, C_0=E,\\
&&y_n^i=J^{-1}\left(\alpha_n Jx_n +(1-\alpha_n)J(J+r_i A_i)^{-1}Jx_n\right),i=1,2,\ldots,N,\\
&&i_n=\arg\max_{1\le i\le N}\left\{\left\|y_n^i-x_n \right\|\right\},\bar{y}_n:=y_n^{i_n},\\
&&C_{n+1}=\left\{v\in C_n:\phi(v,\bar{y}_n)\le \phi(v,x_n)\right\},\\
&&x_{n+1}=\Pi_{C_{n+1}}x_0, n\ge 0,
\end{eqnarray*}
where $\left\{r_i\right\}_{i=1}^N$ are given positive numbers and $\left\{\alpha_n\right\}$ is a sequence in $[0,1]$ such that 
$\lim\limits_{n\to\infty}\alpha_n =0$.
Then $\left\{x_n\right\}$ converges strongly to $x^{\dagger}:=\Pi_S x_0$.
\end{corollary}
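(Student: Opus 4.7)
The plan is to reduce Corollary \ref{coro.MMM} directly to Theorem \ref{theo.extend2} by reinterpreting the resolvents $J_{r_i}:=(J+r_iA_i)^{-1}J$ as the operators $T_i$ of that theorem. Concretely, I would take $C:=E$ (trivially a closed convex subset of itself) and set $T_i:=J_{r_i}$ for $i=1,\ldots,N$. With this identification, the recursion in Corollary \ref{coro.MMM} is literally the recursion of Theorem \ref{theo.extend2}: the inner expression $J(J+r_iA_i)^{-1}Jx_n$ is nothing but $JT_ix_n$, so the convex combinations carried out in $E^*$ coincide, the choice of $\bar y_n$ is identical, and the shrinking sets $C_{n+1}$ are defined by the same inequality $\phi(v,\bar y_n)\le\phi(v,x_n)$.

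The verification of the hypotheses of Theorem \ref{theo.extend2} then proceeds in three short steps. First, because $D(A_i)=E$ and $E$ is reflexive, strictly convex, and smooth, each $J_{r_i}$ is a well-defined single-valued self-mapping of $E$, so $T_i:C\to C$. Second, Lemma \ref{lem.MNOper} delivers both the identification $F(T_i)=A_i^{-1}(0)$ and the quasi-$\phi$-nonexpansivity $\phi(u,T_ix)\le\phi(u,x)$ for all $u\in F(T_i)$ and $x\in E$; in particular
$$F:=\bigcap_{i=1}^N F(T_i)=\bigcap_{i=1}^N A_i^{-1}(0)=S\neq\emptyset,$$
which supplies the nonempty common fixed-point set required by Theorem \ref{theo.extend2}. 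Third, Lemma \ref{Closed-of-Jr} gives the closedness of each $T_i$.

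Once these three items are in place, Theorem \ref{theo.extend2} applies verbatim and yields $x_n\to \Pi_F x_0=\Pi_S x_0$, which is exactly the claim.

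There is essentially no technical obstacle here: every ingredient (closedness of the resolvent, quasi-$\phi$-nonexpansivity, the identification of its fixed-point set with $A_i^{-1}(0)$) has already been packaged in Lemmas \ref{lem.MNOper} and \ref{Closed-of-Jr}, and the boundedness/Lipschitz-type assumptions that complicated Theorem \ref{theo.converges} and Theorem \ref{theo.extend1} have already been discharged in Theorem \ref{theo.extend2}. If anything requires care at all, it is merely the bookkeeping of matching the two iteration schemes and noting that taking $C=E$ is legitimate because $D(A_i)=E$ guarantees $J_{r_i}(E)\subset E$; no new estimate, no new convergence argument, and no re-examination of the five-step scheme is needed.
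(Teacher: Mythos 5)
Your reduction is exactly the paper's own argument: take $C=E$, set $T_i=(J+r_iA_i)^{-1}J$, invoke Lemma \ref{Closed-of-Jr} for closedness and Lemma \ref{lem.MNOper} for $F(T_i)=A_i^{-1}(0)$ and quasi $\phi$-nonexpansivity, and then apply Theorem \ref{theo.extend2}. The proposal is correct and coincides with the paper's proof in both structure and the lemmas used.
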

\begin{proof}
Let $C=D(A_i) = E$ and $T_i=(J+r_iA_i)^{-1}J:C\to C$. By Lemmas $\ref{Closed-of-Jr}$ and $\ref{lem.MNOper},$ 
the mappings $T_i,  i = 1, \ldots, N,$ are closed and quasi $\phi$-nonexpansive. Moreover,   
$F=\bigcap_{i=1}^N F(T_i)=\bigcap_{i=1}^N A_i^{-1}(0)=S\ne \O$. Thus, Theorem $\ref{theo.extend2}$ ensures the conclusion 
of  Corollary  3.10.
\end{proof}
\subsection{\bf Sequential hybrid methods}\label{subsec:2.2}
Now, we consider a sequential method for finding a common fixed point of a finite family of \textcolor[rgb]{1.0,0.0,0.0}{asymptotically quasi $\phi$-nonexpansive} mappings.
\begin{theorem}\label{T.converges.se2}
Let $C$ be a nonempty closed convex subset of  a real uniformly smooth and uniformly convex Banach space $E$, and 
$\left\{T_i\right\}_{i=1}^N:C\to C$ be a finite family of \textcolor[rgb]{1.0,0.0,0.0}{asymptotically quasi $\phi$-nonexpansive} mappings with 
$\left\{k_n\right\} \subset [1,+\infty), k_n\to 1$. Suppose $\left\{T_i\right\}_{i=1}^N$ are uniformly $L$ - Lipschitz continuous and 
the set $F=\bigcap^N_{i=1} F(T_i)$ is unempty and bounded in $C$, i.e., \textcolor[rgb]{1.0,0.0,0.0}{$F \subset \Omega := \{u \in C: ||u|| \leq \omega \}$ for some positive $\omega$}. Let $\left\{x_n\right\}$ be the sequence generated by
\begin{eqnarray*}
&&x_1\in C_1=Q_1:=C,\\
&&y_n=J^{-1}\left(\alpha_n Jx_1 +(1-\alpha_n)JT^{p_n}_{j_n} x_n\right),\\
&&C_{n}=\left\{v\in C:\phi(v,y_n)\le \alpha_n\phi(v,x_1)+(1-\alpha_n)\phi(v,x_n)+\epsilon_n\right\},\\
&&Q_n=\left\{v\in Q_{n-1}:\left\langle Jx_1-Jx_n;x_n-v\right\rangle \ge 0\right\},\\
&&x_{n+1}=\Pi_{C_{n}\bigcap Q_n}x_1, n\ge 1,
\end{eqnarray*}
where $n=(p_n -1)N+j_n, j_n \in \left\{1,2,\ldots,N\right\}$, $ p_n \in \left\{1,2,\ldots\right\}$, \textcolor[rgb]{1.0,0.0,0.0}{$ \epsilon_n=(k_{p_n}-1)(\omega+||x_n||)^2$} and $\left\{\alpha_n\right\}$ is a sequence in $[0,1]$ such that $\lim\limits_{n\to\infty}\alpha_n =0$. Then the sequence $\left\{x_n\right\}$ converges strongly to $x^{\dagger}:=\Pi_F x_1$.
\end{theorem}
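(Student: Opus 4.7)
The plan is to follow the five-step skeleton used for Theorem~\ref{theo.extend1}, adapted to the two-set construction $(C_n,Q_n)$ and to the cyclic indexing $n=(p_n-1)N+j_n$. First I would verify that $C_n$ is closed and convex by rewriting the defining inequality $\phi(v,y_n)\le\alpha_n\phi(v,x_1)+(1-\alpha_n)\phi(v,x_n)+\epsilon_n$ as a linear inequality in $v$, and that $Q_n$ is closed and convex as the intersection of $Q_{n-1}$ with a half-space. The inclusion $F\subset C_n$ follows from the convexity-of-$\|\cdot\|^2$ chain that for $u\in F$,
\[
\phi(u,y_n)\le\alpha_n\phi(u,x_1)+(1-\alpha_n)k_{p_n}\phi(u,x_n)\le\alpha_n\phi(u,x_1)+(1-\alpha_n)\phi(u,x_n)+\epsilon_n,
\]
where I use $\phi(u,x_n)\le(\omega+\|x_n\|)^2$ because $\|u\|\le\omega$. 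The inclusion $F\subset Q_n$ is proved inductively: assuming $F\subset Q_{n-1}$ gives $F\subset C_{n-1}\cap Q_{n-1}$, and the characterization of the generalized projection $x_n=\Pi_{C_{n-1}\cap Q_{n-1}}x_1$ from Lemma~\ref{lem.proProjec}(ii) yields exactly the half-space inequality defining $Q_n$.

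Next I would exploit the identity $x_n=\Pi_{Q_n}x_1$, which is read off from $x_n\in Q_n$ and the defining inequality of $Q_n$. Combined with $F\subset Q_n$ and Lemma~\ref{lem.proProjec}(i), this gives $\phi(x_n,x_1)\le\phi(u,x_1)$ for every $u\in F$; inequality $(\ref{eq:proPhi})$ then bounds $\{x_n\}$. Using $x_{n+1}\in Q_n$ and Lemma~\ref{lem.proProjec}(i) again, $\phi(x_n,x_1)\le\phi(x_{n+1},x_1)-\phi(x_{n+1},x_n)$, so $\{\phi(x_n,x_1)\}$ is nondecreasing and bounded, hence convergent, and $\phi(x_{n+1},x_n)\to 0$. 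Iterating the inequality for $m>n$ gives $\phi(x_m,x_n)\to 0$, and Lemma~\ref{lem.prophi} upgrades this to the Cauchy property; set $p:=\lim_{n\to\infty}x_n\in C$. Since $x_{n+1}\in C_n$, I obtain $\phi(x_{n+1},y_n)\le\alpha_n\phi(x_{n+1},x_1)+(1-\alpha_n)\phi(x_{n+1},x_n)+\epsilon_n$, and all three terms tend to zero (the last because $\epsilon_n=(k_{p_n}-1)(\omega+\|x_n\|)^2\to 0$ by boundedness of $\{x_n\}$ and $k_n\to 1$), so Lemma~\ref{lem.prophi} yields $\|x_{n+1}-y_n\|\to 0$, hence $\|x_n-y_n\|\to 0$ and $y_n\to p$. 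From $\|Jy_n-JT_{j_n}^{p_n}x_n\|=\alpha_n\|Jx_1-JT_{j_n}^{p_n}x_n\|$ together with boundedness of $\{T_{j_n}^{p_n}x_n\}$ (which comes from $\phi(u,T_{j_n}^{p_n}x_n)\le k_{p_n}\phi(u,x_n)$ for a fixed $u\in F$) and uniform continuity of $J^{-1}$ on bounded sets, I conclude $\|T_{j_n}^{p_n}x_n-x_n\|\to 0$.

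The main obstacle is to promote this single-operator estimate into $T_ip=p$ for every $i\in\{1,\dots,N\}$, since each iteration only touches one operator. For fixed $i$ I would consider the subsequence $n_k:=(k-1)N+i$, for which $j_{n_k}=i$ and $p_{n_k}=k$; this gives $\|T_i^k x_{n_k}-x_{n_k}\|\to 0$, and combined with $x_{n_k}\to p$ yields $T_i^k x_{n_k}\to p$. Applying the same at $n_{k+1}=n_k+N$ produces $T_i^{k+1}x_{n_{k+1}}\to p$, and the common Lipschitz constant $L$ gives
\[
\|T_i^{k+1}x_{n_k}-T_i^{k+1}x_{n_{k+1}}\|\le L\,\|x_{n_k}-x_{n_{k+1}}\|\to 0,
\]
so $T_i(T_i^k x_{n_k})=T_i^{k+1}x_{n_k}\to p$. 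Since $T_i^k x_{n_k}\to p$ and $T_i$ is Lipschitz, hence continuous, passing to the limit forces $T_ip=p$; letting $i$ vary gives $p\in F$. Finally, using $x^\dagger=\Pi_F x_1\in F\subset Q_n$, $x_n=\Pi_{Q_n}x_1$, and Lemma~\ref{lem.proProjec}(i), I get $\phi(x_n,x_1)\le\phi(x^\dagger,x_1)$; passing to the limit and using continuity of $\phi(\cdot,x_1)$ yields $\phi(p,x_1)\le\phi(x^\dagger,x_1)$, and the uniqueness of the minimizer of $\phi(\cdot,x_1)$ over $F$ concludes $p=x^\dagger$.
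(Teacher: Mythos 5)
Your proposal is correct, and steps 1, 2, 3 (boundedness, monotonicity of $\phi(x_n,x_1)$, the Cauchy property, $\|x_n-y_n\|\to 0$, and $\|x_n-T_{j_n}^{p_n}x_n\|\to 0$) and the final projection argument coincide with the paper's. Where you genuinely diverge is in promoting the single-operator estimate to $p\in F$. The paper proves a separate auxiliary result (Lemma~\ref{lem.help}): using the index identities $p_n-1=p_{n-N}$ and $j_n=j_{n-N}$, it chains Lipschitz estimates to show $\|x_n-T_{j_n}x_n\|\to 0$ for the \emph{full} sequence, then shifts indices again to get $\|x_n-T_l x_n\|\to 0$ for every $l$, and only then lets $n\to\infty$. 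You instead restrict to the arithmetic subsequence $n_k=(k-1)N+i$ along which $T_i$ is the active operator with power $p_{n_k}=k$, obtain $T_i^k x_{n_k}\to p$ and $T_i^{k+1}x_{n_k}\to p$ (the latter via $\|T_i^{k+1}x_{n_k}-T_i^{k+1}x_{n_{k+1}}\|\le L\|x_{n_k}-x_{n_{k+1}}\|\to 0$, using the Cauchy property), and conclude $T_ip=p$ from continuity of $T_i$ and uniqueness of limits. This is exactly the mechanism of Step~4 in the proof of Theorem~\ref{theo.converges}, transplanted to the cyclic setting; it buys you a shorter argument that bypasses Lemma~\ref{lem.help} entirely, at the cost of only controlling $T_i$ along a subsequence --- which is all that is needed here, since $x_n\to p$ for the whole sequence. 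The paper's route yields the stronger statement $\|x_n-T_lx_n\|\to 0$ for all $n$, which is of independent interest but not required for the theorem. Both arguments are valid.
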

For the proof of Theorem $\ref{T.converges.se2}$ we need the following result.
\begin{lemma}\label{lem.help}
Assume that all conditions of Theorem $\ref{T.converges.se2}$ holds. Moreover, 
$$\lim_{n\to \infty}\left\|x_n -T^{p_n}_{j_n} x_n\right\|=0,\,\lim_{n\to \infty}\left\|x_n -x_{n+l}\right\|=0$$
 for all $l\in \left\{1,2,\ldots,N\right\}$. Then 
$$\lim_{n\to \infty}\left\|x_n -T_l x_n\right\|=0, \quad l=1,\ldots,N.$$
\end{lemma}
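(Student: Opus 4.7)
The plan is to exploit the cyclic indexing rule $n = (p_n-1)N + j_n$: for each fixed $l \in \{1,\ldots,N\}$, the set $\{n : j_n = l\}$ is the arithmetic progression $l, l+N, l+2N, \ldots$, and on this progression the exponent $p_n$ increases by exactly one between consecutive terms (so $p_{n+N} = p_n+1$ whenever $j_n = l$). The argument then splits into two stages.

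\textbf{Stage 1 (subsequence $j_n = l$).} Along this subsequence the first extra hypothesis reads $\|x_n - T_l^{p_n} x_n\| \to 0$. Since uniform $L$-Lipschitz continuity in particular yields $\|T_l u - T_l v\| \le L\|u-v\|$, applying $T_l$ to both arguments gives
\begin{equation*}
\|T_l^{p_n+1} x_n - T_l x_n\| \le L\|T_l^{p_n} x_n - x_n\| \to 0.
\end{equation*}
The second extra hypothesis (with shift $N$) combined with the same Lipschitz estimate gives $\|T_l^{p_n+1} x_n - T_l^{p_n+1} x_{n+N}\| \le L\|x_n - x_{n+N}\| \to 0$. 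Finally, since $j_{n+N} = l$ and $p_{n+N} = p_n+1$, the vector $T_l^{p_n+1} x_{n+N}$ equals $T_{j_{n+N}}^{p_{n+N}} x_{n+N}$, so $\|x_{n+N} - T_l^{p_n+1} x_{n+N}\| \to 0$ again by the first extra hypothesis. Chaining these four estimates with $\|x_n - x_{n+N}\| \to 0$ through the triangle inequality yields $\|x_n - T_l x_n\| \to 0$ along this subsequence.

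\textbf{Stage 2 (full sequence).} For an arbitrary $n \ge 1$, choose $s_n \in \{0,1,\ldots,N-1\}$ with $j_{n+s_n} = l$, which is possible because every window of $N$ consecutive indices contains one with $j = l$. The $L$-Lipschitz property of $T_l$ then gives
\begin{equation*}
\|x_n - T_l x_n\| \le (1+L)\|x_n - x_{n+s_n}\| + \|x_{n+s_n} - T_l x_{n+s_n}\|.
\end{equation*}
The first summand is dominated by $\max_{0 \le s \le N-1}\|x_n - x_{n+s}\|$, a maximum over finitely many quantities each tending to $0$ by the second extra hypothesis; the second summand tends to $0$ by Stage 1, since $n + s_n \to \infty$ through indices with $j = l$.

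The main subtlety is bridging the gap between control of the high power $T_l^{p_n}$ and control of $T_l$ itself. The trick is to observe that one additional application of $T_l$ (costing only a factor $L$) simultaneously converts $\|x_n - T_l^{p_n} x_n\| \to 0$ into control of $\|T_l^{p_n+1}x_n - T_l x_n\|$ \emph{and} aligns $T_l^{p_n+1} x_{n+N}$ with the directly controlled iterate $T_{j_{n+N}}^{p_{n+N}} x_{n+N}$. Once this algebraic alignment between $(p_n, j_n)$ and $(p_{n+N}, j_{n+N})$ is noticed, the rest is routine triangle-inequality bookkeeping.
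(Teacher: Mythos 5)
Your proof is correct and follows essentially the same route as the paper's: exploit the periodicity $j_{n\pm N}=j_n$, $p_{n\pm N}=p_n\pm 1$ together with the uniform $L$-Lipschitz property to reduce the high power $T_{j_n}^{p_n}$ to a single application of $T_{j_n}$, then use $\left\|x_n-x_{n+l}\right\|\to 0$ to pass from the subsequence with $j=l$ to the full sequence. The only (cosmetic) difference is that you shift forward to $n+N$ and work with $T_l^{p_n+1}$, whereas the paper shifts backward to $n-N$ and peels off one power to $T_{j_n}^{p_n-1}$.
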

\begin{proof}
For each $n>N$, we have $n=(p_n-1)N+j_n$. Hence $n-N=((p_n-1)-1)N+j_n =(p_{n-N}-1)N+j_{n-N}$. So 
$$p_n-1=p_{n-N}, j_n=j_{n-N}.$$
We have 
\begin{eqnarray*}
\left\|x_n -T_{j_n} x_n\right\|&\le&  \left\|x_n -T^{p_n}_{j_n} x_n\right\|+\left\|T^{p_n}_{j_n} x_n-T_{j_n} x_n\right\| \\ 
&\le&  \left\|x_n -T^{p_n}_{j_n} x_n\right\|+L\left\|T^{p_n-1}_{j_n} x_n-x_n\right\| \\ 
&\le&  \left\|x_n -T^{p_n}_{j_n} x_n\right\|+L\left\|T^{p_n-1}_{j_n} x_n-T^{p_n-1}_{j_{n-N}} x_{n-N}\right\| \\ 
&&+L\left\|T^{p_n-1}_{j_{n-N}} x_{n-N}-x_{n-N}\right\|+L\left\|x_{n-N}-x_n\right\| \\
&=&\left\|x_n -T^{p_n}_{j_n} x_n\right\|+L\left\|T^{p_{n-N}}_{j_{n-N}} x_{n-N}-x_{n-N}\right\|\\
&&+(L^2+L)\left\|x_{n-N}-x_n\right\|.
\end{eqnarray*}
This together with the hypotheses of Lemma $\ref{lem.help}$ implies 
$$\lim_{n\to \infty}\left\|x_n -T_{j_n} x_n\right\|=0.$$
For each $l\in\left\{1,2,\ldots,N\right\}$ we have
\begin{eqnarray*}
\left\|x_n -T_{j_{n+l}} x_n\right\|&\le&  \left\|x_n - x_{n+l}\right\|+\left\|x_{n+l}-T_{j_{n+l}} x_{n+l}\right\| +\left\|T_{j_{n+l}} x_{n+
l}-T_{j_{n+l}} x_n\right\|\\ 
&\le&\left\|x_n - x_{n+l}\right\|+\left\|x_{n+l}-T_{j_{n+l}} x_{n+l}\right\| +L\left\|x_{n+l}-x_n\right\| \\
&=&(1+L)\left\|x_n - x_{n+l}\right\|+\left\|x_{n+l}-T_{j_{n+l}} x_{n+l}\right\|.
\end{eqnarray*}
Hence, $\lim_{n\to \infty}\left\|x_n -T_{j_{n+l}} x_n\right\|=0$ for all $l\in\left\{1,2,\ldots,N\right\}$; therefore,
$$ \forall \epsilon>0,\exists n_0:\forall n\ge n_0 ~  \forall l=1,\ldots,N,||x_n-T_{j_{n+l}}x_n||<\epsilon. $$
On the other hand, for any fixed $n\ge 0$ and $i=1,\ldots,N$, we can find $l\in \left\{1,\ldots,N\right\}$, such that $i=j_{n+l}$. Thus, $||x_n-T_i x_n||\le \sup_{l\in \left\{1,\ldots,N\right\}}||x_n-T_{j_{n+l}}x_n||<\epsilon$ for all $n\ge n_0$, which means that $\lim_{n\to \infty}\left\|x_n -T_i x_n\right\|=0,i=1,\ldots, N.$
 The proof of Lemma $\ref{lem.help}$ is complete.

\textit{Proof of Theorem $\ref{T.converges.se2}$}.
The proof will be divided into five steps.\\
\textit{Step 1.} The sets $F, C_n, Q_n$ are closed and convex for all $n\ge 1$. \\
Indeed, from the uniform $L$-Lipschitz continuty of $T_i$, we see that $T_i$ is closed. By Lemma $\ref{lem.proCloseConvex}$, 
$F(T_i)$ is closed and convex subset of $C$ for all $i=1,\ldots, N$. Hence, $F=\bigcap^N_{i=1} F(T_i)$ is closed and convex. 
Further, $C_n$ and $Q_n$ are closed for all $n\ge 1$ by the definition. From the inequality $\phi(v,y_n)\le \alpha_n\phi(v,x_1)+(1-\alpha_n) \phi(v,x_n)+\epsilon_n$, we obtain
$$2\left\langle v,Jx_n\right\rangle+2\alpha_n\left\langle v,Jx_1-Jy_n-Jx_n\right\rangle\le \alpha_n\left\|x_1 \right\|^2+(1-\alpha_n)\left\|x_n\right\|^2-\left\|y_n\right\|^2+\epsilon_n,$$
which implies the convexity of  $C_n$ for all $n\ge 1$. Further, $Q_1=C$ is convex. If $Q_n$ is convex for some $n\ge 1$, 
then $Q_{n+1}$  is also 
convex by the definition. So, $Q_n$ is convex for all $n\ge 1$.\\
\textit{Step 2.} $F\subset C_n \bigcap Q_n$ for all $n\ge 1$. \\
For each $u\in F$, we have
\begin{eqnarray*}
\phi(u,y_n)&=&\left\|u\right\|^2-2\left\langle u,Jy_n\right\rangle+\left\|y_n\right\|^2\\ 
&=& \left\|u\right\|^2-2\alpha_n\left\langle u,Jx_1\right\rangle -2(1-\alpha_n)\left\langle u,JT_{j_n}^{p_n} x_n\right\rangle\\
&&+\left\|\alpha_n Jx_1 +(1-\alpha_n)JT_{j_n}^{p_n} x_n \right\|^2\\
&\le& \left\|u\right\|^2-2\alpha_n\left\langle u,Jx_1\right\rangle -2(1-\alpha_n)\left\langle u,JT_{j_n}^{p_n} x_n\right\rangle\\
&&+\alpha_n\left\|x_1\right\|^2+(1-\alpha_n)\left\|T_{j_n}^{p_n} x_n\right\|^2 \\
&=&\alpha_n\phi(u,x_1)+(1-\alpha_n)\phi(u,T_{j_n}^{p_n} x_n)\\
&\le& \alpha_n\phi(u,x_1)+k_{p_n}(1-\alpha_n)\phi(u,x_n)\\
&\le& \alpha_n\phi(u,x_1)+(1-\alpha_n)\phi(u,x_n)+(k_{p_n}-1)(1-\alpha_n)\phi(u,x_n) \\
&\le& \alpha_n\phi(u,x_1)+(1-\alpha_n)\phi(u,x_n)+(k_{p_n}-1)(\omega+||x_n||^2)\\
&=& \alpha_n\phi(u,x_1)+(1-\alpha_n)\phi(u,x_n)+\epsilon_n.
\end{eqnarray*}
This implies that $u\in C_{n}$. Hence $F\subset C_{n}$ for all $n\ge 1$. We also have $F\subset Q_1=C$. Suppose that $F\subset 
Q_n$ for some $n\ge 1$. From $x_{n+1}=\Pi_{C_{n}\bigcap Q_n}x_1$ and Lemma $\ref{lem.proProjec}$, it follows that 
$\left\langle Jx_1 - Jx_{n+1},x_{n+1}-z\right\rangle \ge 0$ for all $z\in C_n \bigcap Q_n$. Since $F\subset C_n \bigcap Q_n$, we have
 $$\left\langle Jx_1 - Jx_{n+1},x_{n+1}-z\right\rangle \ge 0$$ 
for all $z\in F$. Hence, from the definition of $Q_{n+1}$, we obtain $F\subset Q_{n+1}$. By the induction, $F\subset Q_n$ for all 
$n\ge 1$.\\
\textit{Step 3.} $\lim_{n\to\infty}\left\| x_n -T_l x_n\right\|=0$ for all $l=1,\ldots,N$. \\
Since $x_n=\Pi_{Q_n}x_1, F\subset Q_n$, by Lemma $\ref{lem.proProjec}$, we have $ \phi(x_n,x_1)\le \phi(p,x_1)-\phi(x_n,p)\le 
\phi(p,x_1)$ for each $p\in F$. Hence, the 
sequence $\left\{\phi(x_n,x_1)\right\}$ and $\left\{x_n\right\}$ are bounded. Moreover, from $x_{n+1}=\Pi_{C_{n}\bigcap 
Q_n}x_1\in Q_n$, $x_n=\Pi_{Q_n}x_1$ and Lemma $\ref{lem.proProjec}$, it follows that $\phi(x_n,x_1)\le \phi(x_{n+1},x_1)$. Thus, 
the sequence $ \left\{\phi(x_n,x_1)\right\} $ is nondecreasing and the limit of the sequence $\left\{\phi(x_n,x_1)\right\}$ exists. 
This together with $\phi(x_{n+1},x_n)\le \phi(x_n,x_1)+\phi(x_{n+1},x_1)$, implies that 
\begin{equation}\label{eq:27}
\lim_{n\to \infty} \phi(x_{n+1},x_n)=0.
\end{equation}
Since $\left\{x_n\right\} $ is bounded, there exists $M>0$ such that $\left\|x_n\right\|\le M$ for all $n\ge 1$. Using the boundedness of  $F$ 
and estimate $(\ref{eq:proPhi})$, we get 
\begin{equation}\label{eq:28}
\textcolor[rgb]{1.0,0.0,0.0}{\epsilon_n=(k_{p_n}-1)\left(\omega+\left\|x_n\right\|\right)^2\le(k_{p_n}-1)\left(\omega+M\right)^2 \to 0 }
\, (n\to \infty).
\end{equation}
Taking into account $x_{n+1}=\Pi_{C_{n}\bigcap Q_n}x_1 \in C_n$, and using the relations $(\ref{eq:27}),(\ref{eq:28})$, 
and $\lim_{n\to \infty} \alpha_n =0$, from the definition of $C_n$  we find
$$\phi(x_{n+1},y_n)\le \alpha_n\phi(x_{n+1},x_1)+(1-\alpha_n)\phi(x_{n+1},x_n)+\epsilon_n \to 0  \, (n\to \infty).$$
Lemma $\ref{lem.prophi}$ gives
$$\lim_{n\to \infty}\left\|x_{n+1}-y_n\right\|=\lim_{n\to \infty}\left\|x_{n+1}-x_n\right\|=\lim_{n\to \infty}\left\|x_{n}-y_n\right\|=0.$$
and
\begin{equation}\label{eq:30}
\lim_{n\to \infty}\left\|x_{n+l}-x_n\right\|=0
\end{equation}
for all $l\in \left\{1,2,\ldots,N\right\}$.
Note that from $y_n=J^{-1}\left(\alpha_n Jx_1 +(1-\alpha_n)JT^{p_n}_{j_n} x_n\right)$, we have
\begin{equation}\label{eq:31}
\left\|Jy_n -JT^{p_n}_{j_n} x_n\right\|=\alpha_n\left\|Jx_1 -JT^{p_n}_{j_n} x_n\right\|.
\end{equation}
Observing that $\left\{x_n\right\}$ is bounded, $T_{j_n}$ is uniformly $L$ - Lipschitz continuous and the solution set $F$ is not empty
, we have $||Jx_1 - JT^{p_n}_{j_n} x_n|| \leq ||Jx_1||+||JT^{p_n}_{j_n} x_n|| = ||x_1|| + ||T^{p_n}_{j_n} x_n|| \leq ||x_1|| 
+||T^{p_n}_{j_n} x_n - T^{p_n}_{j_n}\xi|| + ||\xi|| \leq ||x_1|| + L||x_n - \xi|| +|\xi||$, where $\xi \in F$ is an arbitrary fixed element. 
The last inequality proves the boundedness of the sequence  $\left\{\left\|Jx_1 -JT^{p_n}_{j_n} x_n\right\|\right\}$. Using 
$\lim\limits_{n\to \infty}\alpha_n=0$, from $(\ref {eq:31})$, we find
$$\lim\limits_{n\to \infty}\left\|Jy_n -JT^{p_n}_{j_n} x_n\right\|=0.$$
Since $J^{-1}:E^* \to E$ is uniformly continuous on each bounded set, we get
$$\lim_{n\to \infty}\left\|y_n -T^{p_n}_{j_n} x_n\right\|=0.$$
This together with $\lim_{n\to \infty}\left\|x_{n}-y_n\right\|=0$ implies that
\begin{equation}\label{eq:33}
\lim_{n\to \infty}\left\|x_n -T^{p_n}_{j_n} x_n\right\|=0.
\end{equation}
From $(\ref{eq:30}),(\ref{eq:33})$ and Lemma $\ref{lem.help}$, we obtain 
\begin{equation}\label{eq:34}
\lim_{n\to \infty}\left\|x_n -T_l x_n\right\|=0
\end{equation}
for all $l\in \left\{1,2,\ldots,N\right\}$. \\
\textit{Step 4.} $\lim_{n\to\infty}x_n=p \in F$. \\
Indeed, note that the limit of the sequence $\left\{\phi(x_n,x_1)\right\}$ exists. By the 
construction of $Q_n$, we have $Q_m\subset Q_n$ for all $m\ge n$. Moreover, $x_n=\Pi_{Q_n} x_1$ and $x_m\in Q_m\subset 
Q_n$. These together with Lemma $\ref{lem.proProjec}$ imply that $\phi(x_m,x_n)\le \phi(x_m,x_1)-\phi(x_n,x_1)\to 0$ as  $m,n\to 
\infty$. By Lemma $\ref{lem.prophi}$, we get \\
$\lim_{m,n\to\infty}\left\| x_m - x_n\right\|=0$. Hence, $\left\{x_n\right\}$ is a Cauchy 
sequence. Since $C$ is a closed and convex subset of the Banach space $E$, the sequence $\left\{x_n\right\}$ converges strongly to 
$p\in C$. Since $T_l$ is $L$ - Lipschitz continuous mapping, it is continuous for all $l\in \left\{1,2,\ldots,N\right\}$. Hence
$$\left\|p -T_l p\right\|=\lim_{n\to \infty}\left\|x_{n} -T_l x_{n}\right\|=0,\, \forall l\in \left\{1,2,\ldots,N\right\}.$$
This implies that $p\in F$.\\
\textit{Step 5.} $p=\Pi_F x_1$. \\
From $x^\dagger:=\Pi_F x_1 \in F \subset C_n \bigcap Q_n$ and $x_{n+1}=\Pi_{C_n \bigcap Q_n} 
x_1$, we have $\phi\left(x_{n+1},x_1\right)\le \phi \left(x^\dagger,x_1\right)$. Hence
$$\phi \left(p,x_1\right)=\lim_{n\to\infty}\phi\left(x_{n},x_1\right) \le \phi \left(x^\dagger,x_1\right).$$
Therefore, $p=x^\dagger$. The proof of Theorem $\ref{T.converges.se2}$ is complete.
\end{proof}
For a finite family of closed and quasi $\phi$ - nonexpansive mappings, the assumption on the boundedness of 
$F=\bigcap^N_{i=1} F(T_i)$ is redundant.

\begin{theorem}\label{T.converges.se3}
Let $E$ be a real uniformly smooth and uniformly convex Banach space, and $C$ a nonempty closed convex subset of $E$. Let 
$\left\{T_i\right\}_{i=1}^N:C\to C$ be a finite family of closed and quasi $\phi$ - nonexpansive mappings. Suppose 
$\left\{T_i\right\}_{i=1}^N$ are $L$ - Lipschitz continuous and $F=\bigcap^N_{i=1} F(T_i) \ne \O$. Let $\left\{x_n\right\}$ be the 
sequence generated by
\begin{eqnarray*}
&&x_1\in C_1=Q_1:=C,\\
&&y_n=J^{-1}\left(\alpha_n Jx_1 +(1-\alpha_n)JT_{j_n} x_n\right),\\
&&C_{n}=\left\{v\in C:\phi(v,y_n)\le \alpha_n\phi(v,x_1)+(1-\alpha_n)\phi(v,x_n)\right\},\\
&&Q_n=\left\{v\in Q_{n-1}:\left\langle Jx_1-Jx_n;x_n-v\right\rangle \ge 0\right\},\\
&&x_{n+1}=\Pi_{C_{n}\bigcap Q_n}x_1, n\ge 1,
\end{eqnarray*}
where $n=(p_n -1)N+j_n, j_n \in \left\{1,2,\ldots,N\right\}$ and $\left\{\alpha_n\right\}$ is a sequence in $[0,1]$ such that 
$\lim\limits_{n\to\infty}\alpha_n =0$. Then the sequence $\left\{x_n\right\}$ converges strongly to $x^{\dagger}:=\Pi_F x_1$.
\end{theorem}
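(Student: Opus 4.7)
The plan is to mimic the five-step structure used in Theorem \ref{T.converges.se2}, but exploit the fact that closed and quasi $\phi$-nonexpansive mappings correspond exactly to the case $k_n \equiv 1$, so that $\epsilon_n \equiv 0$ and the boundedness of $F$ is not needed. I would also replace $T_{j_n}^{p_n}$ by $T_{j_n}$ throughout, which simplifies the asymptotic-regularity argument considerably.

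\textbf{Step 1 and Step 2.} Closedness and convexity of $F$ follow from Lemma \ref{lem.proCloseConvex} applied to each $T_i$ (the $L$-Lipschitz assumption gives closedness of each $T_i$). The sets $C_n$ and $Q_n$ are closed and convex by the same linear/halfspace argument as in Theorem \ref{T.converges.se2}: $C_n$ is the sublevel set of an affine functional in $v$ (after expanding $\phi$), and $Q_n$ is a halfspace intersected with $Q_{n-1}$. For $F \subset C_n \cap Q_n$, I would use the quasi $\phi$-nonexpansiveness of each $T_i$ directly: for $u \in F$, by convexity of $\|\cdot\|^2$,
\[
\phi(u,y_n) \leq \alpha_n \phi(u,x_1) + (1-\alpha_n)\phi(u, T_{j_n} x_n) \leq \alpha_n \phi(u, x_1) + (1-\alpha_n)\phi(u, x_n),
\]
so $u \in C_n$. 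The inclusion $F \subset Q_n$ follows by induction via Lemma \ref{lem.proProjec}(ii), exactly as in Theorem \ref{T.converges.se2}.

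\textbf{Step 3.} From $x_n = \Pi_{Q_n} x_1$ and $F \subset Q_n$, Lemma \ref{lem.proProjec} yields boundedness of $\{\phi(x_n, x_1)\}$, hence of $\{x_n\}$, and monotonicity gives existence of $\lim_n \phi(x_n, x_1)$. The inclusion $x_{n+1} \in Q_n$ and $x_n = \Pi_{Q_n} x_1$ yields $\phi(x_{n+1}, x_n) \to 0$, so $\|x_{n+1} - x_n\| \to 0$ by Lemma \ref{lem.prophi}. Using $x_{n+1} \in C_n$ together with $\alpha_n \to 0$ gives $\phi(x_{n+1}, y_n) \to 0$ and hence $\|x_{n+1} - y_n\| \to 0$ and $\|x_n - y_n\| \to 0$. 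The identity $\|Jy_n - JT_{j_n} x_n\| = \alpha_n \|Jx_1 - JT_{j_n} x_n\|$ combined with the boundedness of $\{JT_{j_n} x_n\}$ (which follows from quasi $\phi$-nonexpansiveness plus \eqref{eq:proPhi}, not the $L$-Lipschitz estimate) and uniform continuity of $J^{-1}$ on bounded sets yields $\|y_n - T_{j_n} x_n\| \to 0$, and therefore $\|x_n - T_{j_n} x_n\| \to 0$.

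\textbf{Step 4 and Step 5.} To pass from one active index $j_n$ to every $l \in \{1, \dots, N\}$, I would replace Lemma \ref{lem.help} by its simpler (no-powers) counterpart: for any fixed $l$ and any $n$, there exists $k \in \{0, 1, \dots, N-1\}$ with $j_{n+k} = l$, and then
\[
\|x_n - T_l x_n\| \leq (1+L)\|x_n - x_{n+k}\| + \|x_{n+k} - T_{j_{n+k}} x_{n+k}\|.
\]
Since $\|x_{n+1} - x_n\| \to 0$ implies $\|x_n - x_{n+k}\| \to 0$ uniformly in $k \in \{0, \dots, N-1\}$, we conclude $\|x_n - T_l x_n\| \to 0$ for every $l$. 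The Cauchy argument $\phi(x_m, x_n) \leq \phi(x_m, x_1) - \phi(x_n, x_1) \to 0$ (for $m \geq n$, using $x_m \in Q_m \subset Q_n$) gives $x_n \to p \in C$; continuity of each $T_l$ then forces $p \in F$. Finally, $\phi(x_{n+1}, x_1) \leq \phi(x^\dagger, x_1)$ from $x^\dagger = \Pi_F x_1 \in C_n \cap Q_n$ yields $p = x^\dagger$ after passing to the limit. The main obstacle is the cyclic-to-uniform passage in the new Lemma \ref{lem.help}-analogue, but it is noticeably simpler here because no power $T_{j_n}^{p_n}$ intervenes.
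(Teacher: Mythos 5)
Your proposal is correct and follows essentially the same route as the paper, which simply observes that quasi $\phi$-nonexpansive mappings are asymptotically quasi $\phi$-nonexpansive with $k_n\equiv 1$, sets $\epsilon_n=0$, and repeats the steps of Theorem \ref{T.converges.se2} together with the argument of Lemma \ref{lem.help}. Your only refinements --- the power-free simplification of the Lemma \ref{lem.help} argument and bounding $\{T_{j_n}x_n\}$ via quasi $\phi$-nonexpansiveness and (\ref{eq:proPhi}) rather than the Lipschitz estimate --- are exactly what "arguing similarly" amounts to here, and they are carried out correctly.
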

\begin{proof}
By our assumption, $\left\{T_i\right\}_{i=1}^N$ is a finite family of closed and \textcolor[rgb]{1.0,0.0,0.0}{asymptotically quasi $\phi$-nonexpansive} 
mappings with 
$k_n=1$ for all $n \ge 0$. Putting $\epsilon_n=0$ and arguing similarly as in the proofs of Theorem $\ref{T.converges.se2}$ and 
Lemma $\ref{lem.help}$, we obtain $\lim_{n\to \infty}\left\|x_n -T_{j_n} x_n\right\|=0$ and $\lim_{n\to \infty}\left\|x_n -T_{l} 
x_n\right\|=0$ for all $l\in \left\{1,2,\ldots,N\right\}$. Now repeating Steps 4 and 5 of  the proof of Theorem $\ref{T.converges.se2}$, 
we come to the conclusion of Theorem $\ref{T.converges.se3}$.
\end{proof}
\begin{remark}
One can  establish the convergence of a monotone hybrid method as in Theorem $\ref{T.converges.se2}$, which modifies Liu's algorithm \cite{L2011}.
\end{remark}
\begin{corollary}\label{coro.se.RNM4} 
Let $C$ be a nonempty closed convex subset of a real uniformly smooth and uniformly convex Banach space $E$. Let 
$\left\{T_i\right\}_{i=1}^N:C\to C$ be a finite family of closed relatively nonexpansive mappings. Suppose 
$\left\{T_i\right\}_{i=1}^N$ are $L$ - Lipschitz continuous and $F=\bigcap^N_{i=1} F(T_i) \ne \O$. Let $\left\{x_n\right\}$ be the 
sequence generated by
\begin{eqnarray*}
&&x_1\in C_1=Q_1:=C,\\
&&y_n=J^{-1}\left(\alpha_n Jx_1 +(1-\alpha_n)JT_{j_n} x_n\right),\\
&&C_{n}=\left\{v\in C:\phi(v,y_n)\le \alpha_n\phi(v,x_1)+(1-\alpha_n)\phi(v,x_n)\right\},\\
&&Q_n=\left\{v\in Q_{n-1}:\left\langle Jx_1-Jx_n;x_n-v\right\rangle \ge 0\right\},\\
&&x_{n+1}=\Pi_{C_{n}\bigcap Q_n}x_1, n\ge 1,
\end{eqnarray*}
where $n=(p_n -1)N+j_n, j_n \in \left\{1,2,\ldots,N\right\}$ and $\left\{\alpha_n\right\}$ is a sequence in $[0,1]$ such that 
$\lim\limits_{n\to\infty}\alpha_n =0$. Then the sequence $\left\{x_n\right\}$ converges strongly to $x^{\dagger}:=\Pi_F x_1$.
\end{corollary}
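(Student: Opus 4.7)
The plan is to reduce Corollary \ref{coro.se.RNM4} directly to Theorem \ref{T.converges.se3}, so essentially no new analytical work is needed; one only has to verify that closed relatively nonexpansive mappings satisfy all the hypotheses of that theorem. The reduction rests on the elementary observation that relative nonexpansivity is stronger than quasi $\phi$-nonexpansivity: by Definition \ref{def.RNM}(i), each $T_i$ has $F(T_i) \ne \emptyset$ and $\phi(p, T_i x) \le \phi(p,x)$ for all $p \in F(T_i)$ and $x \in C$, which is precisely the condition in Definition \ref{def.RNM}(iii). Hence the family $\{T_i\}_{i=1}^N$ is automatically a family of closed quasi $\phi$-nonexpansive mappings.

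Next I would record that the remaining hypotheses of Theorem \ref{T.converges.se3} are assumed outright in the corollary: each $T_i$ is closed, the family is uniformly $L$-Lipschitz continuous, and the common fixed point set $F = \bigcap_{i=1}^N F(T_i)$ is nonempty. The iterative scheme defining $\{x_n\}$ in the corollary, including the cyclic control index $j_n$ coming from the representation $n = (p_n-1)N + j_n$ and the sequence $\{\alpha_n\} \subset [0,1]$ with $\alpha_n \to 0$, coincides verbatim with the scheme in Theorem \ref{T.converges.se3} (this is the reason only the first-power $T_{j_n}$ appears, and no $\epsilon_n$ term is needed, since $k_n \equiv 1$ here).

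With these verifications in place, Theorem \ref{T.converges.se3} applies directly and yields that $\{x_n\}$ converges strongly to $x^\dagger := \Pi_F x_1$, which is exactly the conclusion of Corollary \ref{coro.se.RNM4}.

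There is no real obstacle; the only mildly nontrivial point is noting that $F$ is closed and convex so that $\Pi_F x_1$ is well-defined, but this is covered inside the proof of Theorem \ref{T.converges.se3} (via Lemma \ref{lem.proCloseConvex}, using the closedness of each $T_i$). The whole proof therefore reduces to a one-sentence invocation of Theorem \ref{T.converges.se3}.
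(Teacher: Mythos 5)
Your reduction is exactly what the paper intends: the corollary is stated without proof as an immediate consequence of Theorem \ref{T.converges.se3}, since by Definition \ref{def.RNM} every closed relatively nonexpansive mapping is a closed quasi $\phi$-nonexpansive mapping and all other hypotheses are carried over verbatim. The proposal is correct and takes essentially the same approach as the paper.
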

\begin{corollary}\label{coro.MMM}
Let $E$ be a real uniformly smooth and smooth convex Banach space. Let $\left\{A_i\right\}_{i=1}^N:E\to E^*$ be a finite family of 
maximal monotone mappings with $D(A_i)=E$ for all $i=1,\ldots,N$. Suppose that the solution set $S$ of the system of operator 
equations $A_i (x)=0, i=1,\ldots,N$ is nonempty. Let $\left\{x_n\right\}$ be the sequence generated by
\begin{eqnarray*}
&&x_1\in E, C_1=E,\\
&&y_n=J^{-1}\left(\alpha_n Jx_1 +(1-\alpha_n)J(J+r_{j_n} A_{j_n})^{-1}Jx_n\right),i=1,2,\ldots,N,\\
&&C_{n}=\left\{v\in C:\phi(v,y_n)\le \alpha_n\phi(v,x_1)+(1-\alpha_n)\phi(v,x_n)\right\},\\
&&Q_n=\left\{v\in Q_{n-1}:\left\langle Jx_1-Jx_n;x_n-v\right\rangle \ge 0\right\},\\
&&x_{n+1}=\Pi_{C_{n}\bigcap Q_n}x_1, n\ge 1,
\end{eqnarray*}
where $\left\{r_i\right\}_{i=1}^N$ are given positive numbers and $\left\{\alpha_n\right\}$ is a sequence in $[0,1]$ such that 
$\lim\limits_{n\to\infty}\alpha_n =0$.
Then $\left\{x_n\right\}$ converges strongly to $x^{\dagger}:=\Pi_S x_1$.
\end{corollary}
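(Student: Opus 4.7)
The plan is to reduce this corollary to Theorem \ref{T.converges.se3} by identifying each resolvent $T_i := (J+r_i A_i)^{-1} J$ with a mapping to which that theorem applies. First I would set $C = E$, which is trivially a nonempty closed convex subset of $E$, and define $T_i : E \to E$ as above for $i=1,\ldots,N$. The iteration scheme in the corollary then coincides formally with the scheme in Theorem \ref{T.converges.se3} applied to the family $\{T_i\}_{i=1}^N$, since the inner term $J(J+r_{j_n} A_{j_n})^{-1} J x_n$ is exactly $J T_{j_n} x_n$.

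Next I would verify the hypotheses of Theorem \ref{T.converges.se3}. By Lemma \ref{Closed-of-Jr}, each resolvent $T_i$ is a closed hemi-relatively nonexpansive, i.e., quasi $\phi$-nonexpansive, mapping. By Lemma \ref{lem.MNOper}(i), $F(T_i) = A_i^{-1}(0)$, so
\begin{equation*}
F := \bigcap_{i=1}^{N} F(T_i) = \bigcap_{i=1}^{N} A_i^{-1}(0) = S \ne \emptyset,
\end{equation*}
which supplies the nonemptiness condition. With these facts, Theorem \ref{T.converges.se3} yields strong convergence of $\{x_n\}$ to $\Pi_F x_1 = \Pi_S x_1 = x^{\dagger}$, which is the desired conclusion.

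The main obstacle I anticipate is the $L$-Lipschitz hypothesis on $\{T_i\}$ required by Theorem \ref{T.converges.se3}: in the parallel Corollary \ref{coro.MMM} (Section 3.1), no such condition is needed because Theorem \ref{theo.extend2} is formulated without Lipschitz continuity, whereas the sequential version has it built in. I would therefore either add a standing assumption that the resolvents $(J+r_i A_i)^{-1} J$ are $L$-Lipschitz on $E$ (which is automatic in Hilbert spaces where $J$ is the identity and the resolvent is firmly nonexpansive, hence nonexpansive), or else inspect the proof of Theorem \ref{T.converges.se3} to see where Lipschitz continuity is essentially used and whether the closedness of $T_i$ combined with the structure of the resolvent already suffices. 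Once this point is settled, the remainder of the argument is a transparent specialization, so no further work beyond citing the theorem and the two lemmas is required.
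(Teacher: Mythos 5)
Your reduction is exactly the route the paper takes: for the parallel analogue of this corollary the paper sets $C=E$, $T_i=(J+r_iA_i)^{-1}J$, invokes Lemmas \ref{Closed-of-Jr} and \ref{lem.MNOper} to get closedness, quasi $\phi$-nonexpansiveness, and $F=\bigcap_{i=1}^{N}F(T_i)=\bigcap_{i=1}^{N}A_i^{-1}(0)=S\ne\emptyset$, and then cites the corresponding convergence theorem; the sequential corollary here is printed without proof but is clearly intended to follow from Theorem \ref{T.converges.se3} in the same way, which is what you do. The $L$-Lipschitz worry you raise is genuine and is not addressed by the paper either: Theorem \ref{T.converges.se3} does assume the $T_i$ are $L$-Lipschitz continuous, resolvents in a general uniformly smooth and uniformly convex Banach space need not be Lipschitz, and so the corollary as stated silently inherits an unverified hypothesis --- your proposed remedies (adding the Lipschitz assumption explicitly, restricting to the Hilbert case where it is automatic, or checking that the proof of Theorem \ref{T.converges.se3} uses Lipschitz continuity only where closedness of the resolvent would suffice) are the right ways to close that gap.
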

\textcolor[rgb]{1.0,0.0,0.0}{We end this paper by considering a numerical example.
Suppose we are given two sequences of positive numbers $0 < t_1 < \ldots < t_N  < 1$ and $  s_i \in (1,  \frac{1}{1- t_i}];    i =1, \ldots, 
N.$ An example of such $\{s_i\}_{i=1}^N$  are  $s_i = \sum_{k=0}^{m_i} t_i^k,$ where the integers $m_i \geq 1$ for all $i 
=1,\ldots, N$.\\
 Let $E =  R^1$ be a Hilbert space with the standart inner product $\left\langle x,y \right\rangle := xy$ 
and the norm $||x|| := |x|$ 
for all $x, y \in E$.  In this case the normalized dual mapping $J=I$ and the Lyapunov functional $\phi (x,y) = |x-y|^2.$   We define the 
mappings $T_i: C \to C,    i =1,\ldots, N,$ where $C := [0,1],$ as follows:\\
$ T_i(x) = 0, $ for $ x \in [0, t_i],$ and $T_i(x) = s_i(x-t_i),$ if $x \in [t_i, 1]$.}\\
\textcolor[rgb]{1.0,0.0,0.0}{It is easy to verify that $F(T_i) = \{0\},  \phi(T_i(x), 0)= |T_i(x)|^2 \leq |x|^2 = \phi(x,0) $ for every $x\in C$ and $|T_i(1) - T_i(t_i)| = s_i 
(1-t_i) > |1-t_i|.$
Hence, the mappings $T_i$ are quasi $\phi$-nonexpansive but not nonexpansive.}\\
\textcolor[rgb]{1.0,0.0,0.0}{According to Theorem 3, the iteration sequence $\{x_n\}$ generated by
\begin{eqnarray*}
&&x_0\in C, C_0:=C,\\
&&y_n^i=\alpha_n x_n +(1-\alpha_n)T_i x_n,   i=1,2,\ldots,N,\\
&&i_n=\arg\max_{1\le i\le N}\left\{|y_n^i-x_n |\right\}, \bar{y}_n:=y_n^{i_n},\\
&&C_{n+1}:=\left\{v\in C_n: |v - \bar{y}_n| \le |v - x_n| \right \},\\
&&x_{n+1}=\Pi_{C_{n+1}}x_0, n\ge 0,
\end{eqnarray*}
strongly converges to $x^\dagger := 0,$ provided the sequence $\{\alpha_n\}$ is chosen such that $\alpha_n \in [0, 1]$ and $\alpha_n \to 0$
 as $n \to \infty.$\\
Starting from $C_0 = C = [0, 1]$ we have 
\begin{equation}\label{eq:35}
C_1 = \{v \in C_0 : 2(\bar{y}_0-x_0) (\frac{x_0 + \bar{y}_0}{2}-v) \leq 0 \}.
\end{equation} }
\textcolor[rgb]{1.0,0.0,0.0}{Due to the proof of Theorem 3,  $F = \{0\} \subset C_1,$ hence $(\bar{y}_0 - x_0) ( \frac{x_0 + \bar{y}_0}{2}) \leq 0.$ Thus, 
$\bar{y}_0 \leq x_0.$  If $\bar{y}_0=x_0$ then from the definition of $i_0$, we find $y_0^i=x_0$ for all $i=1,...,N$. Moreover, since  
$y_0^i=\alpha_0 x_0 +(1-\alpha_0)T_i x_0$, we get $x_0=\alpha_0 x_0 +(1-\alpha_0)T_i x_0,    i=1,\ldots, N,$ hence, $x_0$ is a desired 
common fixed point and the algorithm finishes at step $n=0$.
Now suppose that $\bar{y}_0<x_0$. Then $\left(\ref{eq:35}\right)$ implies that $C_1=[0,\frac{x_0+\bar{y}_0}{2}]$ and 
$x_1=\Pi_{C_1}x_0 = \frac{x_0 + \bar{y}_0}{2}.$\\
We assume by induction that at the $n$-th step ($n \geq 1$), either $x_{n-1}$ is a common fixed point of $T_i,  i=1,\ldots,N,$ and the 
algorithm finishes at the ($n-1$)-step,  or  $C_{n}=[0,\frac{x_{n-1}+\bar{y}_{n-1}}{2}]$ and $x_n=\Pi_{C_n}x_0 = \frac{x_{n-1} + 
\bar{y}_{n-1}}{2}.$  By the definition of $C_{n+1}$ we have\\
$C_{n+1} =  \{v \in C_n: 2(\bar{y}_n-x_n) (\frac{x_n + \bar{y}_n}{2}-v) \leq 0 \}, $ or equivalently,
\begin{equation}\label{eq:36}
C_{n+1}= [0,\frac{x_{n-1}+\bar{y}_{n-1}}{2}]\bigcap \{v \in [0,1]: 2(\bar{y}_n-x_n) (\frac{x_n + \bar{y}_n}{2}-v) \leq 0 \}
\end{equation}}
\textcolor[rgb]{1.0,0.0,0.0}{Since $F = \{0\} \subset C_{n+1},$ we find that $(\bar{y}_n - x_n) ( \frac{x_n + \bar{y}_n}{2}) \leq 0,$ hence $\bar{y}_n \leq x_n.$ If 
$\bar{y}_n=x_n$ then by the definition of $i_n$, we get $y_n^i=x_n$ for all $i=1,...,N$. On the other hand, $y_n^i=\alpha_n x_n 
+(1-\alpha_n)T_i x_n$, hence, $x_n=\alpha_n x_n +(1-\alpha_n)T_i x_n$. Thus, $x_n$ is a common fixed point of the family 
$\{T_i\}_{i-1}^N$ and the algorithm finishes at the $n$-th step. In the remaining case  $\bar{y}_n<x_n,$  relation $(\ref{eq:36})$ 
gives
\begin{equation}\label{eq:37}
C_{n+1}=[0,\frac{x_{n-1}+\bar{y}_{n-1}}{2}]\bigcap [0,\frac{x_{n}+\bar{y}_{n}}{2}].
\end{equation}}
\textcolor[rgb]{1.0,0.0,0.0}{Noting that $\frac{x_{n}+\bar{y}_{n}}{2}< x_n=\frac{x_{n-1}+\bar{y}_{n-1}}{2},$ and using  $(\ref{eq:37})$ we come to the conclusion that
$C_{n+1}=[0,\frac{x_{n}+\bar{y}_{n}}{2}],$ and $x_{n+1}=\Pi_{C_{n+1}}x_0 = \frac{x_n + \bar{y}_n}{2}.$}\\
\textcolor[rgb]{1.0,0.0,0.0}{On the other hand, applying Liu's sequential method [20], at the $n$-th iteration, we need to compute $  y_n := \alpha_n x_0 + (1-\alpha_n) T_{k_n}x_n,$  where $k_n = n({\rm mod}N) + 1.$  Observing that $0 \leq T_{k_n}x_n \leq x_n \leq 1,$ we  have if $x_n = T_{k_n}x_n$ then $x_n$ is a fixed point of $T_{k_n},$ which is also a common fixed point of the family $\{T_i\}_{i=1}^N.$ Otherwise, we get $ T_{k_n}x_n < x_n,$ which leads to the formula} 
$$x_{n+1} = \min \{x_n, \frac{\alpha_n x_0^2+(1-\alpha_n)x_n^2-y_n^2}{2(\alpha_nx_0+(1-\alpha_n)x_n-y_n)}\}. $$
\textcolor[rgb]{1.0,0.0,0.0}{The numerical experiment is performed on a LINUX cluster 1350 with 8 computing nodes. Each node contains two Intel Xeon dual core 3.2 GHz, 2GBRam. All the programs are written in C.\\
For given tolerances we compare execution time of  the parallel hybrid method (PHM) and Liu's sequential method (LSM) [20]. From 
Tables 1-3, we see that within a given tolerance, the sequential method is more time consuming than the parallel one, in both parallel and sequential mode. Further, whenever the tolerance is small, the sequential method converges very slowly or practically diverges.\\
We use the following notations:}
\textcolor[rgb]{1.0,0.0,0.0}{\begin{center}
\begin{tabular}{l l}
PHM & The parallel hybrid method\\
LSM & Liu's sequential method [20]\\
$N$ & Number of  quasi $\phi$-nonexpansive mappings\\
$TOL$ & Tolerance $\|x_k - x^*\|$ \\
very slow conv. & Convergence is very slow or divergence \\
$T_p$  & Time for PHM's execution in parallel mode (2CPUs - in seconds)\\
$T_s$ & Time for PHM's execution in sequential mode (in seconds) \\
$T_L$  & Time for LSM's execution (in seconds).
\end{tabular}
\end{center}}
\textcolor[rgb]{1.0,0.0,0.0}{We perform experiments with $N = 5\times10^6, \quad t_i = \frac{i}{N+1}, \quad s_i = 1+t_i, \quad  i=1,\ldots,N.$}
\begin{table}[ht]\caption{Experiment with $\alpha_n = 1/n. $}\label{tab:1}
\medskip\begin{center}
\begin{tabular}{|c|c|c|c|}
\hline
 $\qquad TOL \qquad$ & \multicolumn{2}{c|}{PHM} &{LSM}
\\ \cline{2-4}
  & $\qquad T_p\qquad$ & $\qquad T_s\qquad$ & $\qquad T_L\qquad$\\ \hline
 $10^{-5}$ & 1.06 & 1.90 &  Very slow conv. \\
 $10^{-6}$ & 1.26 & 2.10 &  Very slow conv. \\
  $10^{-8}$ & 1.47 & 2.74 &  Very slow conv. \\ \hline
 \end{tabular}\end{center}
\end{table}
\begin{table}[ht]\caption{Experiment with $\alpha_n = \frac{1}{\log n +2}. $}\label{tab:2}
\medskip\begin{center}
\begin{tabular}{|c|c|c|c|}
\hline
 $\qquad TOL \qquad$ & \multicolumn{2}{c|}{PHM} &{LSM}
\\ \cline{2-4}
& $\qquad T_p\qquad$ & $\qquad T_s\qquad$ & $\qquad T_L\qquad$ \\ \hline
 $10^{-5}$ & 1.27 & 2.52 &  Very slow conv. \\
 $10^{-6}$ & 1.48 & 2.95 &  Very slow conv. \\
  $10^{-8}$ & 1.89 & 3.58 &  Very slow conv. \\ \hline
 \end{tabular}\end{center}
\end{table}
\begin{table}[ht]\caption{Experiment with $\alpha_n = 10^{-n}. $}\label{tab:3}
\medskip\begin{center}
\begin{tabular}{|c|c|c|c|}
\hline
 $\qquad TOL \qquad$ & \multicolumn{2}{c|}{PHM} &{LSM}
\\ \cline{2-4}
  & $\qquad T_p\qquad$ & $\qquad T_s\qquad$ & $\qquad T_L\qquad$ \\ \hline
 $10^{-5}$ & 0.84 & 1.68 &  Very slow conv. \\
 $10^{-6}$ & 1.05 & 1.90 &  Very slow conv. \\
  $10^{-8}$ & 1.26 & 2.31&   Very slow conv. \\ \hline
 \end{tabular}\end{center}
\end{table}
\textcolor[rgb]{1.0,0.0,0.0}{Within  the tolerance $TOL=10^{-4},$ for $\alpha_n = 1/n $ and  $\alpha_n = 10^{-n}$, the computing times of Liu's method  are  30.89 sec. and 26.57 sec.,  respectively.  Moreover, for $\alpha_n =1/(\log n +2),$ after 287.25 sec., Liu's method gives an approximate solution $\tilde{x}=0.327,$ which is very far from the exact solution $x^* =0.$  When  $TOL = 10^{-k}, \quad k = 5, 6, 8, $ Liu's method is practically divergent.} \\
\textcolor[rgb]{1.0,0.0,0.0}{Tables 1-3 give the execution times of the parallel hybrid method in parallel mode ($T_p$) and sequential mode ($T_s$) within the given tolerances TOL for different choices of $\alpha_n.$  The maximal speed up of the parallel hybrid method is $S_p := T_s/T_p  \approx 2.0,$ hence, the efficency of the parallel computation by using two processors is $E_p := S_p / 2 \approx 1.0.$
}
\begin{acknowledgements}
\textcolor[rgb]{1.0,0.0,0.0}{The authors are greateful to the referees for their useful comments to improve this article. We thank V. T. 
Dzung for performing computation on the LINUX cluster 1350}.  The research of the first author was partially supported by Vietnam Institute for Advanced Study in Mathematics (VIASM) and  
Vietnam National Foundation for Science and Technology Development (NAFOSTED).
\end{acknowledgements}

\end{document}